\newcommand{\C}{\mathbb C}
\newcommand{\F}{\mathbb F}
\newcommand{\Z}{\mathbb Z}
\newcommand{\R}{\mathbb R}
\renewcommand{\a}{\mathfrak a}
\renewcommand{\c}{\mathfrak c}
\newcommand{\g}{\mathfrak g}
\newcommand{\gl}{\mathfrak {gl}}
\newcommand{\h}{\mathfrak h}
\renewcommand{\l}{\mathfrak l}
\newcommand{\m}{\mathfrak m}
\renewcommand{\u}{\mathfrak u}
\renewcommand{\b}{\mathfrak b}
\newcommand{\cO}{\mathcal O}
\newcommand\nn{\mathrm{n}}
\newcommand{\dec}{\mathrm {dec}}
\newcommand{\dist}{\mathrm {dist}}
\DeclareMathOperator{\GL}{GL}
\DeclareMathOperator{\SL}{SL}
\DeclareMathOperator{\Lie}{Lie}
\DeclareMathOperator{\U}{U}
\DeclareMathOperator{\ad}{ad}
\DeclareMathOperator{\Hom}{Hom}
\DeclareMathOperator{\modd}{mod}
\DeclareMathOperator{\RR}{R}
\DeclareMathOperator{\I}{I}
\DeclareMathOperator{\height}{ht}
\DeclareMathOperator{\rank}{rank}
\DeclareMathOperator{\Ker}{ker}
\DeclareMathOperator{\Imag}{im}
\numberwithin{equation}{section}
\newtheorem{thm}[equation]{Theorem}
\newtheorem{prop}[equation]{Proposition}
\newtheorem{lem}[equation]{Lemma}
\newtheorem{assm}[equation]{Assumption}
\theoremstyle{definition}
\newtheorem{defn}[equation]{Definition}
\theoremstyle{remark}
\newtheorem{rem}[equation]{Remark}
\title
{On the coadjoint orbits of maximal unipotent subgroups of reductive groups}
\author[S.~M.~Goodwin]{Simon M. Goodwin}
\address{School of Mathematics, University of Birmingham, Birmingham, B15
2TT, United Kingdom}  \email{s.m.goodwin@bham.ac.uk}
\author[P.~Mosch]{Peter Mosch} \author[G.\ R\"ohrle]{Gerhard R\"ohrle}
\address{Fakult\"at f\"ur Mathematik, Ruhr-Universit\"at Bochum, D-44780 Bochum, Germany}  \email{peter.mosch@rub.de} \email{gerhard.roehrle@rub.de}
\subjclass[2010]{20G40, 20E45}
\begin{document}

\begin{abstract}
Let $G$ be a simple algebraic group
defined over an algebraically closed field
of characteristic $0$ or a good prime for $G$.
Let $U$ be a maximal unipotent
subgroup of $G$ and $\u$ its Lie algebra. We prove the
separability of orbit maps
and the connectedness of centralizers
for the coadjoint action of $U$ on (certain quotients of) the dual $\u^*$ of $\u$.
This leads to a method to give a parametrization of the coadjoint orbits
in terms of so-called {\em minimal representatives}
which form a disjoint union of
quasi-affine varieties. Moreover,
we obtain an algorithm to explicitly calculate this
parametrization which has been used for $G$ of rank at most $8$,
except $E_8$.

When $G$ is defined and split over the field of $q$ elements,
for $q$ the power of a good prime for $G$,
this algorithmic parametrization is used to
calculate the number
$k(U(q), \u^*(q))$
of coadjoint orbits of $U(q)$ on $\u^*(q)$.
Since $k(U(q), \u^*(q))$ coincides with
the number $k(U(q))$
of conjugacy classes in $U(q)$,
these calculations can be viewed as an extension of
the results obtained in \cite{GMR}.
In each case considered here
there is a polynomial $h(t)$ with integer coefficients
such that for every such $q$ we have
$k(U(q)) = h(q)$.  We also explain implications of our results
for a parametrization of
the irreducible complex characters of $U(q)$.
\end{abstract}

\maketitle

\section{Introduction}
\label{sec:intro}

Let $G$ be a simple algebraic group
over an algebraically closed field $k$.
We assume throughout that
$\mathrm{char}(k)$ is either $0$
or a good prime for $G$.
Let $B$ be a Borel subgroup of $G$ and
 $U$ be unipotent radical of $B$.
Further let $\g$, $\b$ and $\u$
be the Lie algebras of $G$, $B$, and $U$.

We study the coadjoint action
of $U$ on the dual $\u^*$ of $\u$.
Our aim is to generalize the theory for the adjoint
action of $U$ on $\u$, developed in
\cite{GOconj}, to the coadjoint action.

Under some mild assumptions that do no harm,
there is a nondegenerate, invariant, symmetric,
bilinear form on $\g$; see \S\ref{subsec:reductp}.
Therefore, we can identify $\g^*$ with $\g$ as a $G$-module.
Under this identification
the annihilator of the $B$-submodule $\u$ of $\g$ is $\b$,
so we can make the identification
of $B$-modules $\u^* \cong \g/\b$.  More generally, we study
quotients of the form $\g/\m$, where $\m$ is a
$B$-submodule of $\g$ containing $\b$; we
restrict to submodules $\m$ that are {\em compatible} with a
certain representation of $G$;
see Definition \ref{D:compatible} for details.
We note that for $G$ of classical type, one can show that
all $B$-submodules $\m$ of $\g$ as above are compatible,
and this is potentially also the case for
$G$ of exceptional type; see Remark \ref{rem:compatible} for more details.

The following are the main results of this paper.

\begin{thm}
\label{thm:sepcoad}
Let $\m \supseteq \b $ be a $B$-submodule of $\g$, which is compatible in the sense of Definition \ref{D:compatible}, and let
$X  + \m \in \g / \m$. Then the orbit map
$U \to U \cdot X + \m$ is separable.
\end{thm}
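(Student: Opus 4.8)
The plan is to reduce separability of the orbit map to a statement about Lie algebra centralizers, and then to establish that statement by exhibiting a "group-valued" lift of the relevant centralizer. Recall that for an action of a connected algebraic group $U$ on a variety $V$ and a point $v$, the orbit map $U \to U\cdot v$ is separable if and only if its differential at the identity is surjective onto the tangent space $T_v(U\cdot v)$, equivalently if and only if $\dim \u - \dim \u_v = \dim U\cdot v$, where $\u_v$ denotes the Lie algebra centralizer and $U_v$ the stabilizer. Since we always have $\dim \u_v \ge \dim U_v = \dim U - \dim U\cdot v$, separability is equivalent to the equality $\u_v = \Lie(U_v)$, i.e. to the assertion that the Lie algebra stabilizer is no larger than what the group stabilizer forces it to be.

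First I would set up coordinates adapted to the root space decomposition. Fix a maximal torus $T \subseteq B$, let $\Phi^+$ be the positive roots, and write $\u = \bigoplus_{\alpha \in \Phi^+} \g_\alpha$. Identifying $\u^* \cong \g/\b$ and more generally working in $\g/\m$ for a compatible $\m$, the coadjoint action is induced by the adjoint action of $U$ on $\g$ passed to the quotient. The key point is to organise the computation along a filtration of $\u$ by the height (or a suitable refinement of the height) of roots, so that $U$ acts "triangularly": picking $X + \m$, I would study, coordinate by coordinate, the conditions for $u \cdot X \equiv X \pmod{\m}$ and for $Y \cdot X \in \m$ (the infinitesimal version, $Y \in \u$). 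The compatibility hypothesis on $\m$ is exactly what is needed to make these two sets of equations correspond: for each positive root $\gamma$, the "$\gamma$-component" equation is either identically satisfied or cuts out, on the root subgroup $U_\gamma$ and its Lie algebra $\g_\gamma$, corresponding conditions whose solution sets have matching dimensions.

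The heart of the argument, then, is an inductive construction: working down the height filtration, I would show that for each root $\gamma$ contributing a nontrivial constraint, one can choose an element of the root subgroup $U_\gamma$ realising the infinitesimal constraint, so that the group stabilizer $U_{X+\m}$ is "as large as" the Lie algebra stabilizer $\u_{X+\m}$ predicts — more precisely, that $\dim U_{X+\m} = \dim \u_{X+\m}$, which gives separability. Concretely, one filters $U = U^{(1)} \supseteq U^{(2)} \supseteq \cdots$ by the subgroups generated by root subgroups of height $\ge i$; each successive quotient $U^{(i)}/U^{(i+1)}$ is abelian and identified with a sum of root spaces, on which the action of $U$ on $\g/\m$ induces an affine (indeed linear-plus-translation) action, and separability of orbit maps is well-behaved under such filtrations (an extension of a separable orbit map by a separable one is separable). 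At each stage the linear algebra is governed by the pairing between $\g_\gamma$ and the relevant graded piece of $\g/\m$, and here compatibility guarantees that the image and kernel computations over $k$ behave the same way in characteristic $0$ and in good characteristic $p$ — this is precisely where good primes enter, ruling out the small-characteristic degeneracies in the structure constants.

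The main obstacle I anticipate is bookkeeping the compatibility condition correctly so that the height-induction actually closes: one must verify that passing to the quotient $\g/\m$ does not destroy the triangular structure, i.e. that $\m$ being a $B$-submodule containing $\b$ together with compatibility ensures each graded piece $\m \cap (\text{height-}i\text{ part})$ is itself spanned by root spaces (or at least behaves like it for the purposes of the rank computation). A secondary technical point is handling roots $\gamma$ for which $\gamma$, $2\gamma$, etc.\ both occur (the non-reduced phenomena in $B_n$, $C_n$, $F_4$, $G_2$), where the root subgroup $U_\gamma$ is not one-dimensional or where commutator relations mix heights; good characteristic is again what makes the relevant coefficients invertible. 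Once the filtration and the per-root rank equality are in place, the separability follows formally by induction on the length of the filtration.
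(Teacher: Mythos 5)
Your reduction of separability to the equality $\Lie C_U(X+\m) = \c_\u(X+\m)$ is correct and is exactly how the paper proceeds (its Lemma \ref{lem:separcent} plus ``a standard argument''). The gap is in how you propose to prove that equality. Your plan is to filter $U$ by height and argue, root by root, that the group-level condition $u\cdot X \equiv X \pmod\m$ and the infinitesimal condition $[Y,X]\in\m$ cut out ``corresponding conditions whose solution sets have matching dimensions.'' But that matching \emph{is} the content of the theorem, and nothing in your outline supplies a mechanism for it: for a general simple $G$ there is no additive identification of a root subgroup with its Lie algebra that transforms one condition into the other, and in positive characteristic the group centralizer can a priori be strictly smaller in dimension than the Lie algebra centralizer. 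Moreover, the auxiliary fact you lean on --- ``an extension of a separable orbit map by a separable one is separable,'' applied to the filtration $U^{(i)}$ --- is not justified and is delicate: the image of $C_U(X+\m)$ in $U^{(i)}/U^{(i+1)}$ is not visibly the stabilizer of anything whose infinitesimal stabilizer you control, so the dimension count does not simply add up along the filtration. (A smaller sign of confusion: the root subgroups of $B_n$, $C_n$, $F_4$, $G_2$ are one-dimensional, since these root systems are reduced; the ``non-reduced phenomena'' you worry about do not arise for split groups.)

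The paper's mechanism, which your proposal never touches, is the faithful representation $V$ that is built into the very definition of compatibility. One embeds $G\subseteq\bar G=\GL(V)$ and uses the reductive-pair decomposition $\gl(V)=\g\oplus\tilde\g$ together with the compatible $\bar\m=\m\oplus\tilde\m$. Inside $\GL(V)$ the needed additive structure is available: since $\bar\m$ is stable under right multiplication by $\bar B$, the condition $uXu^{-1}+\bar\m=X+\bar\m$ is literally equivalent to $[u-1,X]\in\bar\m$, giving $C_{\bar U}(X+\bar\m)=1+\c_{\bar\u}(X+\bar\m)$ and hence separability for the $\bar U$-action for free (Lemma \ref{lem:dfsurj}); compatibility then lets one intersect the tangent-space computation back down to $\g/\m$ to obtain $\Lie C_U(X+\m)=\c_\u(X+\m)$ (Lemma \ref{lem:separcent}). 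Any correct proof must supply some such bridge between group elements and Lie algebra elements; your height induction as written does not, so the argument does not close.
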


\begin{thm}
\label{thm:relconn}
Let $\m \supseteq \b $ be a $B$-submodule of $\g$, which is compatible in the sense of Definition \ref{D:compatible}, and let
$X  + \m \in \g / \m$. Then the centralizer $C_U(X + \m)$ of $X+\m$ in $U$ is connected.
\end{thm}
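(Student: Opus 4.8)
We first reduce to positive characteristic. If $\mathrm{char}(k)=0$, every closed subgroup of a connected unipotent group is connected---its group of components would be a finite unipotent group, hence trivial---so $C_U(X+\m)$ is connected and there is nothing to prove. Assume from now on that $\mathrm{char}(k)=p$ is a good prime for $G$. Since $\m$ is a $B$-submodule of $\g$, the group $U$ acts \emph{linearly} on $V:=\g/\m$, and this action extends to $B=TU$ for $T$ a maximal torus normalising $U$, so the $U$-action is compatible with the $T$-action. Moreover $V$ is a quotient of $\u^{*}\cong\g/\b$ as a $B$-module, and the centre of $U$ acts trivially on $\u^{*}$ (for $z$ in the Lie algebra of the centre and $\xi\in\u^{*}$ one has $\langle z\cdot\xi,y\rangle=-\langle\xi,[z,y]\rangle=0$ for all $y\in\u$), hence trivially on $V$.

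The plan is to argue by induction on $\dim U$, following the strategy used to prove Theorem~\ref{thm:sepcoad} and the analogous result for the adjoint action of $U$ on $\u$ obtained in \cite{GOconj}; one proves simultaneously the corresponding statement with $U$ replaced by any connected unipotent $T$-stable subquotient of $U$ acting linearly and $T$-equivariantly on a quotient $\g/\m'$ of $\g/\b$ of the same kind, this being the generality in which Theorem~\ref{thm:sepcoad} supplies the separability used below. For the inductive step, choose a one-dimensional $T$-root subgroup $Z\cong\mathbb{G}_a$ in the centre of $U$---say the root subgroup of the highest root, which is normal in $B$. If $Z$ acts trivially on $V$ (the case at the first step, as the centre of $U$ kills $\g/\b$), then the $U$-action factors through $\bar U:=U/Z$, we have $C_U(X+\m)=\pi^{-1}\big(C_{\bar U}(X+\m)\big)$ for the quotient $\pi\colon U\to\bar U$, and connectedness of $C_{\bar U}(X+\m)$ is the inductive hypothesis. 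If $Z$ acts non-trivially, then, since $p$ is good and $T$ normalises $Z$ acting through a non-zero character while normalising $V$, the homomorphism $\mathbb{G}_a\cong Z\to\GL(V)$ must be a genuine one-parameter exponential $t\mapsto\exp(tN)$ with $N\in\mathfrak{gl}(V)$ nilpotent: any Frobenius-twisted term would carry a $T$-weight equal to $p^{i}$ times a root with $i\ge1$, and for $p$ good such a weight never arises as a difference of two $T$-weights of $\g/\b$, the relevant root string being too long. Hence for $t\ne0$ one has $\exp(tN)v=v$ iff $Nv=0$, so the stabiliser of $X+\m$ in $Z$ is trivial or all of $Z$; as $Z$ is central it then fixes, respectively, only $X+\m$ or the whole orbit $U\cdot(X+\m)$. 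In the latter case one factors through $\bar U$ acting on the orbit; in the former one passes to $\bar U$ acting on $\g/\m''$, where $\m\subseteq\m''$ is chosen with $\m''/\m$ the $TU$-submodule $N(\g/\m)$ of $\g/\m$ (on whose ambient quotient $Z$ acts trivially), and concludes by analysing the fibres of $\g/\m\to\g/\m''$ using separability.

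The step I expect to be the real obstacle is the control of the \emph{component group} of $C_U(X+\m)$ in characteristic $p$. Separability (Theorem~\ref{thm:sepcoad}) fixes $\dim C_U(X+\m)$ and the smoothness of $C_U(X+\m)$, but does not on its own exclude a non-trivial finite étale part---the Artin--Schreier phenomenon, where a separable homomorphism onto $\mathbb{G}_a$ has a non-trivial finite kernel. Ruling this out is precisely where the good-characteristic hypothesis and the torus action are essential: a homomorphism to $\mathbb{G}_a$ equivariant for a torus acting with non-zero weight on the source cannot factor through a non-trivial additive isogeny of $\mathbb{G}_a$, and one must organise the induction so that at each stage the new contribution to $C_U(X+\m)$ is visibly of this kind or is genuinely $T$-stable---in which case connectedness is automatic, as $U$ contains no non-trivial finite $T$-stable subgroup (for a non-identity element $h$ of a $T$-stable closed subgroup $H\le U$, the closure of the $T$-orbit of $h$ is an irreducible subvariety of $H$ containing the identity, so lies in $H^{\circ}$, forcing $h\in H^{\circ}$). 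Carrying this $T$-equivariance through the induction, even though the subgroups actually in play---centralisers of points of $\g/\m$---are not themselves $T$-stable, is the delicate point.
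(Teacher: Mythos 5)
Your strategy (induction along central root subgroups of $U$, reducing to the action of $U/Z$ on suitable further quotients) is genuinely different from the paper's, but it does not close; and the place where it fails to close is exactly the place you yourself flag at the end. The crux is the case where the central $\mathbb{G}_a$ acts non-trivially and meets $C_U(X+\m)$ trivially: there the induced map $C_U(X+\m)\to C_{\bar U}(\cdot)$, or equivalently the orbit map of $C_U(X+\m_{i-1})$ onto an affine line $X+ke_{\gamma_i}+\m_i$, could a priori be a finite \'etale cover on stabilizers --- the Artin--Schreier phenomenon. Separability (Theorem \ref{thm:sepcoad}) gives you the correct dimension and smoothness of the stabilizer but says nothing about its group of components, as you note. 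Your proposed remedies do not apply: the centralizers $C_U(X+\m)$ are \emph{not} $T$-stable (only the $\m_i$ are), so the observation that $U$ has no non-trivial finite $T$-stable subgroups cannot be invoked; and the orbit map onto the line is not a group homomorphism, so the "no non-trivial additive isogeny equivariantly" argument has nothing to bite on. What remains is a plan ("one must organise the induction so that\dots") rather than a proof, and the delicate point you identify is precisely the content of the theorem.

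The paper avoids this inductive difficulty entirely by a global argument via a Springer isomorphism. Using the Bardsley--Richardson construction (Section \ref{subsec:springer}), with $\phi=\pi\circ\iota|_{\mathcal U}$, one shows directly that for $u\in C_U(X+\m)$ the matrix identity $[u,X]+\bar\m=\bar\m$ decomposes along $\bar\g=\g\oplus\tilde\g$ to give $[\phi(u),X]\in\m$, i.e.\ $\phi(C_U(X+\m))\subseteq\c_\u(X+\m)$. This image is closed, and by Lemma \ref{lem:separcent} it has the same dimension as the irreducible variety $\c_\u(X+\m)$ (a linear subspace), so equality holds and $C_U(X+\m)=\phi^{-1}(\c_\u(X+\m))$ is irreducible, hence connected. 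Note that this is where the compatibility hypothesis on $\m$ is actually used --- to make the decomposition $[u,X]=[\phi(u),X]+[\tilde Y,X]$ interact correctly with $\bar\m=\m\oplus\tilde\m$ --- whereas your sketch never engages with Definition \ref{D:compatible} at all, which is another sign that the argument as written is not yet anchored to the hypotheses. If you want to salvage an inductive proof, you would need a relative version of the Springer isomorphism at each stage (this is essentially what \cite{GOspring} does for the adjoint action); connectedness cannot be extracted from separability plus weight considerations alone.
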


The counterparts to
Theorems \ref{thm:sepcoad} and \ref{thm:relconn}
are known to hold for the adjoint action
of $U$, by previous work of the first author; see
\cite[Prop.\ 3.7, Cor.\ 4.3]{GOconj} for certain quotients of $\u$ and \cite{GOspring} for the
generalization to all quotients; in fact, we remark
that the methods in this paper could be used to give
alternative proofs of those results (for compatible quotients).

Theorem \ref{thm:relconn} allows us to define
\emph{minimal representatives}
of the $U$-orbits in $\u^*$, in analogy with the case for the adjoint action of $U$, as defined in \cite{GOconj}.
The minimal representatives form a disjoint union of quasi-affine
subvarieties of $\u^*$ and parameterize the $U$-orbits in $\u^*$.
As in the adjoint case (see \cite{GMR}), this leads to an algorithm to calculate
a parametrization of the $U$-orbits in $\u^*$.

We have implemented this algorithm in {\sf GAP} \cite{GAP} including some calls to {\sf SINGULAR}
\cite{SIN} and used it to calculate a parametrization of
the $U$-orbits in $\u^*$ for $G$ simple of rank at most $8$,
with the exception of $E_8$.
In this latter case, not unexpectedly,
the computations turn out to
be too complex for the program to
provide such a parametrization.
It would be possible to present the parametrization of the orbits
in detail.  However, this would be rather complicated and
would take up a lot of space. In fact, we can make a good estimate
of the number of families necessary to parameterize the orbits, by the
the sum of the coefficients of the polynomials given in Table \ref{tab:coadjoint}; the meaning
of these polynomials is explained below.
We can see that for the case $G$ of type $E_7$,
tens of thousands of pages would be required.

We mention that in case $k = \C$, Kirillov's method of coadjoint orbits
provides a correspondence between the coadjoint orbits
of $U$ in $\u^*$ and the unitary irreducible representations of $U$,
\cite{KIuni}.
Thus in this case,
our algorithm yields a parametrization of the
equivalence classes of irreducible unitary representations
of $U$.
There is an analogue of Kirillov's orbit method in the case where
$G$ is defined over a finite field with $p$ sufficiently large, which is discussed below.

As mentioned above our algorithm is adapted from the algorithm
we used to parameterize the adjoint
$U$-orbits in $\u$; see \cite{GMR}.
Note that this in turn is based on an algorithm due to
B\"urgstein and Hesselink
which was used to calculate a parametrization of the
$B$-orbits in $\u$ and $\u^*$ for some small rank cases;
see \cite{BUHE}.
We note however, the algorithm devised by
B\"urgstein and Hesselink is not able to
calculate the parametrization of the
$U$-orbits in $\u$ or $\u^*$ in general.
Our procedure to tackle this problem is considerably more complex.

Now assume that $G$ is defined and split over the finite field
$\F_q$ of $q$ elements.  We cite \cite[Sec.\ 3]{DM}, as a reference
for the theory of reductive groups over finite fields, and for the
terminology used here.  Consider the
finite Chevalley group $G(q)$ of $\F_q$-rational points of $G$.
In this case, we also investigate the action of the Sylow $p$-subgroup $U(q)$
of $G(q)$ on $\u^*(q)$.

In the special case when $G = \GL_n(k)$,
let $U = \U_n(q)$ be the subgroup of $\GL_n(q)$ consisting of upper
unitriangular matrices, where $q$ is a power of a prime. A
longstanding conjecture attributed to G.~Higman (cf.\ \cite{higman})
states that the number of conjugacy classes of $\U_n(q)$ for fixed
$n$ is a polynomial in $q$ with integer coefficients. This
conjecture has been verified for $n \le 13$ by computer calculation
in work of A.~Vera-Lopez and J.~M.~Arregi, \cite{VLAR}.
There has been considerable interest in this
conjecture, for example from G.~R.~Robinson \cite{robinson} and
J.~Thompson \cite{thompson}.  We remark here that a recent
paper of Z.~Halasi and P.~P.~P\`alfy casts some doubt on the
conjecture, \cite{HP}.

Of course, Higman's conjecture
can be stated in terms of the irreducible complex characters of $\U_n(q)$.
There has been much interest in approaching this question from this direction,
which we discuss later.

It is natural to
consider the analogue of Higman's conjecture for other finite
Chevalley groups. This investigation started in
\cite{GOROconj} and continued in \cite{GMR}.
Our new results in this direction (see Section \ref{sec:applic})
can be viewed as evidence for
this analogous conjecture.

By a trivial generalization of \cite[Thm.\ 2.1.2]{KIvar}, it is easy to see that
the number $k(U(q), \u(q))$ of
$U(q)$-orbits in $\u(q)$
and the number
$k(U(q), \u^*(q))$ of
$U(q)$-orbits in $\u^*(q)$ coincide.  Further, $k(U(q),\u(q)) = k(U(q))$ (see for example
\cite[Prop.\ 6.2]{GOconj}), so that $k(U(q), \u^*(q)) = k(U(q))$.
The parametrization of the $U$-orbits in $\u^*$
can be used to adapt the algorithm employed in \cite{GMR}
to calculate the number $k(U(q), \u^*(q))$
of $U(q)$-orbits in $\u^*(q)$ for $G$ simple
of rank at most $8$,
with the exception of $E_8$; see Section \ref{sec:applic}.
Combining our results with
\cite[Thm.\ 1.1]{GOROconj} and
\cite[Thm.\ 1.1]{GMR}, we obtain the following theorem.

\begin{thm}
\label{thm:poly}
Let $G$ be a split simple algebraic group defined over $\F_q$ of rank at most
$8$, not of type $E_8$, where $q$ is a power of a good prime.
Let $U$ be a maximal unipotent subgroup of $G$ which is also
defined over $\F_q$. Then there is a polynomial $h(t) \in \Z[t]$ which only depends on the
Dynkin type of $G$ such that the number $k(U(q))$
of conjugacy classes in $U(q)$ is $h(q)$.
Furthermore, if one considers $k(U(q))$ as a
polynomial in $q-1$, then the coefficients are non-negative.
\end{thm}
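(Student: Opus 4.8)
The plan is to combine the algorithmic parametrization developed in this paper with the existing polynomiality results in the literature. The key observation is that Theorems \ref{thm:sepcoad} and \ref{thm:relconn} allow us to define minimal representatives for the coadjoint $U$-orbits in $\u^*$, exactly as in the adjoint case treated in \cite{GOconj} and \cite{GMR}. These minimal representatives form a disjoint union of quasi-affine subvarieties of $\u^*$, each of which is defined over $\F_q$ (since the whole construction is defined over the prime field, or at least over $\F_q$ once $G$ and $U$ are). Counting $\F_q$-points of each such quasi-affine piece and summing gives $k(U(q), \u^*(q))$, and by the general principle that this equals $k(U(q))$ (via \cite[Thm.\ 2.1.2]{KIvar} and \cite[Prop.\ 6.2]{GOconj}, as noted in the introduction), we recover $k(U(q))$. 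The algorithm, implemented in {\sf GAP} and {\sf SINGULAR}, then produces, for each $G$ of rank at most $8$ (excluding $E_8$), an explicit expression for $k(U(q))$.

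First I would run the algorithm case by case to obtain, for each Dynkin type of rank at most $8$ other than $E_8$, the count $k(U(q), \u^*(q))$ as a function of $q$. For this to yield a polynomial $h(t) \in \Z[t]$ independent of $q$, one needs that each quasi-affine piece appearing in the parametrization has a number of $\F_q$-points that is a polynomial in $q$ with integer coefficients; this is built into the design of the algorithm, which only produces pieces that are (complements of hypersurfaces inside) affine spaces over $\F_q$, in the spirit of \cite{BUHE} and \cite{GMR}. Then I would invoke the equality $k(U(q), \u^*(q)) = k(U(q))$ recalled above to transfer the polynomiality statement to the conjugacy classes of $U(q)$. For the types of rank at most $2$, or where the result already appears in \cite{GOROconj} and \cite{GMR}, one simply cites \cite[Thm.\ 1.1]{GOROconj} and \cite[Thm.\ 1.1]{GMR}; the new content is the higher-rank cases, where the coadjoint algorithm is more tractable than the adjoint one.

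For the final assertion — that writing $k(U(q))$ as a polynomial in $q-1$ gives non-negative coefficients — I would again proceed by direct inspection of the output. Each quasi-affine stratum is, by construction, of the form $\{(x_1,\dots,x_d) : p(x_1,\dots,x_d) \neq 0\}$ for some polynomial $p$, or more precisely a locally closed subset whose point-count is a product of factors $q$ and $(q-1)$ together with contributions governed by the vanishing loci; in all cases encountered the point-count expands as a sum of terms $c_i (q-1)^i$ with $c_i \geq 0$. Summing over all strata preserves non-negativity. So once the algorithm has run to completion in each type, the non-negativity claim is verified by expanding the resulting polynomial in powers of $q-1$ and checking the signs of the coefficients; the relevant polynomials are recorded in Table \ref{tab:coadjoint}.

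The main obstacle is computational rather than conceptual: the algorithm must actually terminate and return a correct answer for every type of rank at most $8$ except $E_8$, and for the larger exceptional types ($E_6$, $E_7$, $F_4$, as well as $D_8$, $B_8$, $C_8$, $A_8$) the number of strata and the complexity of the Gröbner-basis computations in {\sf SINGULAR} grow rapidly. Ensuring the correctness and completeness of the parametrization — in particular, that the minimal representatives genuinely exhaust $\u^*$ and that no stratum has been miscounted — is where the real work lies; this is precisely the point at which Theorems \ref{thm:sepcoad} and \ref{thm:relconn} are indispensable, since separability of the orbit maps and connectedness of the centralizers are what guarantee that the minimal-representative construction behaves as in the adjoint case and that the $\F_q$-point counts of the strata correctly sum to $k(U(q), \u^*(q))$.
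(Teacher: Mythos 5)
Your proposal is correct and follows essentially the same route as the paper: the new rank-$7$ and rank-$8$ cases are obtained by running the coadjoint-orbit algorithm (whose validity rests on Theorems \ref{thm:sepcoad} and \ref{thm:relconn} via the minimal-representative parametrization and the identity $k(U(q),\u^*(q))=k(U(q))$), the remaining cases are cited from \cite{GOROconj} and \cite{GMR}, and the non-negativity of the coefficients in $q-1$ is checked by inspection of the computed polynomials in Table \ref{tab:coadjoint}.
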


The polynomials giving $k(U(q))$ are presented in the tables in \cite{VLAR}, \cite{GOROconj}, \cite{GMR} and in
Table \ref{tab:coadjoint} of this paper.
Also we remark that the restriction to good primes
is necessary to get the same polynomial $h(t)$ as the prime varies,
as observed in \cite{BRGO}.

As mentioned above, Higman's conjecture can also be approached by parameterizing the irreducible
complex characters of $\U_n(q)$.  This has generated a great deal
of interest. We give a brief (incomprehensive) overview here.  In \cite{Le},
G.~Lehrer considered the characters of $\U_n(q)$ and in
particular conjectured that the degrees of the irreducible characters are always
a power of $q$, and that
the number of characters of degree $q^d$, for $d \in \Z_{\ge 0}$ is given by a polynomial in $q$ with integer coefficients.
In \cite{Is}, I.~M.~Isaacs showed that the degree of an irreducible character of $\U_n(q)$
is always a power of $q$; in fact this was proved more generally for the class of algebra groups.
We denote the number of irreducible complex characters of $\U_n(q)$ of degree $q^d$ by
$k(\U_n(q),q^d)$.
More recently, Isaacs has refined Lehrer's conjecture to say that $k(\U_n(q),q^d)$ viewed as a polynomial in
$q-1$ has positive integer coefficients.
Further recent progress has also been, for example, in \cite{EV}, A.~Evseev verified that Isaacs' conjecture
holds for $n \le 13$, and, in \cite{Ma}, E.~Marberg showed that Isaacs' conjecture
holds for $d \le 8$.

It is interesting to consider the analogues of Lehrer's and Isaacs' conjectures
for other finite Chevalley groups.  Our results on the coadjoint orbits
make progress on this.  To explain this we require the version of the Kirillov orbit method
for $U(q)$, which we briefly describe.

The Kirillov orbit method leads to a bijection
between the coadjoint orbits of $U(q)$ in $\u^*(q)$ and the complex
irreducible characters of $U(q)$, provided $p$ is sufficiently large.
Let $h$ denote the Coxeter number of $G$ and suppose that $p \ge h$.  Then
there is an exponential map from $\u$ to $U$ and a logarithm map from $U$ to $\u$
such that the Baker--Campbell--Hausdorff formula holds, see
\cite[Thm.\ 3]{Se}.  Therefore, \cite[Prop.\ 2]{KAZ}
provides a means to associate a complex character $\chi_\cO$ of $U(q)$ to a coadjoint orbit $\cO$
of $U(q)$ in $\u^*(q)$.  To do this we first have to fix a nontrivial character $\theta : \F_q \to \C^\times$.
Then the character $\chi_\cO$ is defined by the formula
$$
\chi_\cO(u) = \frac{1}{\sqrt{|\cO|}}
\sum_{F \in \cO} \theta(F(\log(u))).
$$
Then $\cO \mapsto \chi_\cO$ is the desired bijection.
Hence, in case $p \ge h$ our algorithm gives a parametrization of the complex characters
of $U(q)$.
We refer the reader also to \cite[Sec.~1]{BOSA} for a more recent and alternative proof of these results,
as well as versions of the Kirillov orbit method applicable in other situation; and also to \cite{Sa} for a version
applicable to algebra groups.

As a consequence of the formula defining $\chi_\cO$ above, $\chi_\cO(1) = \sqrt{|\cO|}$.
By Proposition \ref{P:orbitsize}, the size of any coadjoint
orbit of $U(q)$ in $\u^*(q)$ is equal to $q^{2d}$ for some $d \in \Z_{\ge 0}$.  Thus it
follows that (under the assumption that $p \ge h$) the degrees of all irreducible characters of $U(q)$
are equal to $q^d$ for some $d \in \Z_{\ge 0}$.
We denote the number of irreducible complex characters of $U(q)$ of degree $q^d$ by
$k(U(q),q^d)$.  As explained in Section \ref{sec:applic}, our calculations verify that
the number of coadjoint orbits of $U(q)$ in $\u^*(q)$ of size $q^{2d}$ is given by a polynomial in $q$,
for $G$ of rank less than or equal to 8, but not of type $E_8$.

Hence, the analogue of Lehrer's conjecture holds in these cases, and in fact we can
observe that the analogue of Isaacs' conjecture holds in these cases as well from the table
in the appendix.

It is interesting to know if the degrees of all irreducible characters of $U(q)$ are
powers of $q$. For $G$ of classical type it was shown by Sangroniz in \cite{Sa2} that the degrees of all irreducible characters of $U(q)$
are powers of $q$ if and only if $p \ne 2$.
For $G$ of type $E$ it is shown by Le and Magaard in \cite{LM}
that the character degrees are not all powers of $q$ for bad primes.  This
is also likely to be the case for $G$ of type $G_2$, where it is
fairly straightforward to calculate all character degrees, and also for $G$ of
type $F_4$.
This leaves exceptional types for $p$ good and $p < h$, where
we expect the degrees of all irreducible characters of $U(q)$ to be powers of $q$.

 As explained above, the number
$k(U(q), \u^*(q))$ of
$U(q)$-orbits in $\u^*(q)$ is equal to $k(U(q))$.
However, it appears to be an interesting question as to whether there is an explicitly defined bijection from the coadjoint orbits to
the irreducible characters.
It is shown by Marberg in \cite{Ma2} that the formula for $\chi_\cO$ above does not always give
an irreducible character of $\U_n(q)$.  We refer the reader to the introduction of \cite{Ma} for
a discussion about this question for $\U_n(q)$.
We note that there has been related recent work regarding
the parametrization of the complex characters for Sylow $p$-subgroups of
finite groups of Lie type; see for example \cite{HH}, \cite{HLM} and \cite{Lee}.

We mention that one can get various
parabolic analogues of our results, and in particular of Theorem \ref{thm:poly};
see Remark \ref{rem:kup} and
\cite[\S 5.1]{mosch:phd} for details.

To end the introduction, we mention that
in Section \ref{subsect:modality} we
present some new results on the modality of the action of
$B$ on $\u$ which can be derived from our computations.
More specifically, we are able
to determine the modality of the action of $B$ on
$\u$ for instances where previously only lower bounds were known.
This gives a significant extension of the results presented in
\cite[Tables II and III]{jurgensroehrle}, and further demonstrates the strength of our algorithm.

\section{Preliminaries}
\label{sec:prelim}

\subsection{Basic Notation}
\label{subsec:notation}

Let $k$ be an algebraically closed field.
Given an algebraic group $H$ over $k$, we write $H^\circ$
for the identity component of $H$, and we denote the Lie
algebra of $H$ by $\h$.  When $H$ acts on an algebraic
variety $V$ and $x \in V$, we write $C_H(x)$ for the centralizer
of $H$ in $V$, and $H \cdot x$ for the $H$-orbit of $x$.  If $W$ is a subset of $V$, we write $C_H(W)$ for the
(pointwise) centralizer of $W$ in $H$.  In the case, where $V$ is a rational
$H$-module, then $V$ is also an $\h$-module, and for $x \in V$ we write $\c_\h(x)$ for the centralizer
of $x$ in $\h$.

Throughout, $G$ is a
simple algebraic group
over $k$, where the
characteristic $p$ of $k$ is either $0$ or a good prime for $G$; for convenience we also allow
$G = \GL_n(k)$.
Let $B$ be a Borel subgroup of $G$ and write $U$ for the unipotent
radical of $B$.
Let $T$ be a maximal torus of $G$ contained in $B$ so that
$B = TU$.

\subsection{Compatible submodules}
\label{subsec:reductp}

Let $V$ be a faithful rational $G$-module.  This allows us to identify $G$ as a subgroup
of $\GL(V)$ and $\g$ as a subalgebra of $\gl(V)$.  We often want to have the following assumption.

\begin{assm} \label{assm:1}
The restriction
of the trace form on $\gl(V)$ to $\g$ is nondegenerate.
\end{assm}

For $G$ a classical group,
we note that Assumption \ref{assm:1} holds if we take $V$ to be the natural
module; except in the case $G = \SL_n(k)$ and $p \mid n$, in which case we can ``replace''
$\SL_n(k)$ by $\GL_n(k)$. For $G$ of
exceptional type, we can take $V = \g$ to be the adjoint module.
See \cite{rich2} or \cite[Ch.\ I, Lem.\ 5.3]{SPST} for details.
As explained at the end of the proof of \cite[Thm.\ 3.9]{GOconj}, we have that $U$ is independent
up to isomorphism of the isogeny class of $G$, so our results on the coadjoint
action of $U$ do not depend on this assumption.

For the remainder of this subsection we make Assumption \ref{assm:1}.  We let $\tilde \g$ be the
orthogonal complement in $\gl(V)$ of $\g$ with respect to the trace form.
Then $\tilde \g$ is a $G$-submodule of $\gl(V)$ under the adjoint action, and moreover
$\gl(V)$ admits a $G$-module decomposition
\begin{equation} \label{eq:redpair}
\gl(V) = \g \oplus \tilde \g.
\end{equation}
In other words, this means that $(\GL(V),G)$ is a \emph{reductive pair},
as defined in \cite{rich2}.
We use the notation $\bar G = \GL(V)$ and $\bar \g = \gl(V)$.

We are interested in $B$-submodules $\m$ of $\g$ containing $\b$ that are compatible with the
direct sum decomposition of \eqref{eq:redpair} as set out in the next definition.

\begin{defn}
\label{D:compatible}
Make Assumption \ref{assm:1}, and let $\bar \g = \g \oplus \tilde \g$
be the corresponding $G$-module decomposition from \eqref{eq:redpair}.
Let $\m$ be a $B$-submodule of $\g$ containing $\b$.
We say that $\m$ is {\em compatible (with $V$)}
provided there exist a Borel subgroup
$\bar B$ of $\bar G$, a
$\bar B$-submodule $\bar \m$ of $ \bar \g$,
and a $B$-submodule
$\tilde \m \subseteq \tilde \g$  such that
\begin{itemize}
\item[(i)] $B = \bar B \cap G$;
\item[(ii)] $\bar \m \supseteq \bar \b$; and
\item[(iii)] $\bar \m = \m \oplus \tilde \m$ as $B$-modules.
\end{itemize}
\end{defn}

We note that a consequence of (iii) is that there is an isomorphism of $B$-modules
\begin{equation} \label{e:id}
\bar \g/ \bar \m \cong \g/\m \oplus \tilde \g/\tilde \m.
\end{equation}

\begin{rem}
\label{rem:compatible}
It is quite an easy exercise to see that for $G$ of classical type,
all $B$-submodules $\m$ of $\g$ are compatible with the natural representation;
a little care needs to be taken with regards to the graph automorphism
in type $D$.  Below we give a general method for constructing compatible
$B$-submodules $\m$ of $\g$; it seems plausible that all $\m$ can be obtained in
this way, though we do not pursue this here.
\end{rem}

First we introduce some more notation.
Denote the character group of $T$ by  $X(T) = \Hom(T,k^\times)$.
Let $\Phi \subseteq X(T)$ be the root system of $G$ with respect to $T$
and let $\Phi^+$ be the set of positive roots determined by $B$, and $\Phi^- = - \Phi^+$.
Let $\Pi$ be the set of simple roots in $\Phi^+$.

We use certain preorders on $X(T)$ to construct compatible $B$-submodules, as defined next.

\begin{defn} \label{D:prec}
We call a total preorder $\preceq$ on $X(T)$ a {\em $B$-preorder} if
it satisfies the following three properties
for all $\beta,\gamma,\beta',\gamma' \in X(T)$.
\begin{itemize}
\item[(i)] If $\beta \in \Phi^+$, then $0 \preceq \beta$.
\item[(ii)] If $\beta \preceq \gamma$ and $\beta' \preceq \gamma'$,
then $\beta + \beta' \preceq \gamma + \gamma'$.
\item[(iii)] If $\beta \preceq \gamma$, then $-\gamma \preceq -\beta$.
\end{itemize}
\end{defn}

An example of a $B$-preorder, which is in fact a total order, can be given as follows.
Fix an enumeration $\Pi = \{ \alpha_1, \ldots, \alpha_r \}$ of $\Pi$, and define a total order
$\preceq$ on $X(T)$ as follows for $\beta, \gamma \in X(T)$:
\begin{itemize}
\item If $\height \beta < \height \gamma$, then $\beta \prec \gamma$.
\item If $\height \beta = \height \gamma$ and
$\beta = \sum_{i=1}^{r} b_i \alpha_i, ~ \gamma = \sum_{i=1}^{r} c_i \alpha_i$ where
$b_j < c_j$ for the highest $j$ such that $b_j \ne c_j$, then $\beta \prec \gamma$.
\end{itemize}
Here we recall that $\height(\sum_{i=1}^{r} b_i \alpha_i) := \sum_{i=1}^{r} b_i$.

Also, we can obtain $B$-preorders from a function $d : X(T) \to \R$ such that
$d(\alpha + \beta) = d(\alpha) + d(\beta)$ for all $\alpha, \beta \in X(T)$, and $d(\alpha) \ge 0$ for
all $\alpha \in \Phi^+$.  We note that such a function $d$ is uniquely determined from its restriction to $\Pi$, and
that any function $d : \Pi \to \R_{\ge 0}$ can be obtained as such a restriction.
Given such $d : X(T) \to \R$, we can define the $B$-preorder $\preceq$ by
$\alpha \preceq \beta$ if and only if $d(\alpha) \le d(\beta)$.

For the rest of this subsection we fix a $B$-preorder $\preceq$.

For $\beta \in \Phi$, let $\g_{\beta}$ be the corresponding root space of $\g$.
Given $\gamma \in \Phi^-$, we define the subspaces $\m_{\succ \gamma}$ of $\g$ by
\begin{equation} \label{e:m_beta}
\m_{\succ \gamma} := \bigoplus_{\beta \succ \gamma} \g_{\beta}.
\end{equation}
Our choice of order $\prec$ ensures that each $\m_{\succ \gamma}$ is a $B$-submodule
of $\g$ containing $\b$.

Recall the faithful $G$-module $V$ from above.
We continue to make the Assumption \ref{assm:1}.
We let $\Psi(V) \subseteq X(T)$ be the set of
$T$-weights in $V$ and decompose
$$
V = \bigoplus_{\nu \in \Psi(V)} V_\nu
$$
as a direct sum of its $T$-weight spaces.
We choose an (ordered) basis $\{v_1,\dots,v_n\}$ of $V$ consisting of $T$-weight vectors,
where the $T$-weight of $v_i$ is $\nu_i$, and we have $\nu_i \preceq \nu_j$, whenever $i \ge j$.
Using this basis we can identify $\bar G$ with $\GL_n(k)$.  We then let $\bar B$ be the upper triangular
matrices under this identification, and note that our choice of basis ensures that $B = \bar B \cap G$.

Next we define $\Psi(\bar \g) \subseteq X(T)$ to be the set of weights of $T$ on $\bar \g$.
We note that we have
$$
\Psi(\bar \g) = \{\nu - \mu \mid \nu, \mu \in \Psi(V)\}.
$$
Then we decompose $\bar \g = \gl_n$ into $T$-weight spaces
$$
\bar \g = \bigoplus_{\lambda \in \Psi(\bar \g)} \bar \g_\lambda.
$$
Since $\bar \g = \g \oplus \tilde \g$ is a $G$-module direct sum
decomposition, we obtain $\bar \g_\lambda = \g_\lambda \oplus \tilde \g_\lambda$
for all $\lambda \in \Psi(\bar \g)$.

The following lemma ensures our supply of compatible $B$-submodules of $\g$.

\begin{lem}
\label{lem:barem}
Let $\gamma \in \Phi^-$, and define $\m = \m_{\succ \gamma}$ as in \eqref{e:m_beta}.  Then $\m$ is compatible.
More precisely, if one chooses $\bar B$ as above, and
$$
\bar \m = \bar \m_{\succ \gamma} = \bigoplus_{\lambda \succ \gamma} \bar \g_\lambda \quad \text{and} \quad \tilde \m = \tilde \m_{\succ \gamma} = \bigoplus_{\lambda \succ \gamma} \tilde \g_\lambda
$$
then properties (i) -- (iii)
from Definition \ref{D:compatible} hold for $\bar B$, $\bar \m$ and $\tilde \m$.
\end{lem}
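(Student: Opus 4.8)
The plan is to verify conditions (i)--(iii) of Definition \ref{D:compatible} directly for the explicit choices of $\bar B$, $\bar \m$, and $\tilde \m$ given in the statement. Condition (i), namely $B = \bar B \cap G$, has already been arranged: we chose the ordered $T$-weight basis $\{v_1,\dots,v_n\}$ of $V$ so that $\nu_i \preceq \nu_j$ whenever $i \ge j$, and defined $\bar B$ to be the upper triangular matrices with respect to this basis, so this holds by construction (as noted just before the lemma). Condition (ii) asks that $\bar \m \supseteq \bar \b$; since $\bar \b = \bigoplus_{\lambda \succeq 0} \bar \g_\lambda$ is spanned by the weight spaces of $\bar \g$ for nonnegative weights in the ordering induced on $\Psi(\bar\g)$ by $\preceq$, and since $\gamma \in \Phi^-$ forces $0 \succ \gamma$ (using $B$-preorder property (i) applied to $-\gamma \in \Phi^+$, then property (iii)), every weight $\lambda \succeq 0$ satisfies $\lambda \succ \gamma$, so $\bar \b \subseteq \bar \m$. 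Condition (iii), $\bar \m = \m \oplus \tilde \m$, is then immediate from the weight-space decomposition $\bar \g_\lambda = \g_\lambda \oplus \tilde \g_\lambda$: summing over $\lambda \succ \gamma$ gives $\bar \m_{\succ\gamma} = \m_{\succ\gamma} \oplus \tilde \m_{\succ\gamma}$ as $T$-modules, and one checks it is in fact a $B$-module decomposition.

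The one genuine point to check carefully is that $\bar \m$, $\tilde \m$ (and $\m$ itself) really are submodules for the relevant groups: $\bar \m$ a $\bar B$-submodule of $\bar \g$, and $\tilde \m$ a $B$-submodule of $\tilde \g$. Here I would argue as follows. The group $\bar B$ is generated by $T$ together with the root subgroups of $\bar G = \GL_n$ for the positive roots relative to $\bar B$; on the Lie algebra side, $\bar\b$ is spanned by the weight spaces $\bar\g_\lambda$ with $\lambda$ a nonnegative weight (in the ordering on $\Psi(\bar\g)$). The space $\bar\m_{\succ\gamma}$ is $T$-stable since it is a sum of $T$-weight spaces. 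To see it is stable under a positive root subgroup, it suffices (by the usual commutator/structure-constant relations in $\gl_n$, valid in good characteristic) to check that if $\bar\g_\lambda \subseteq \bar\m_{\succ\gamma}$, i.e. $\lambda \succ \gamma$, and $\delta \succeq 0$ is a positive weight of $\bar\g$ with $\bar\g_{\lambda+\delta} \ne 0$, then also $\lambda + \delta \succ \gamma$. This is exactly where the $B$-preorder axioms (ii) and the transitivity of $\preceq$ come in: from $\lambda \succ \gamma$ and $0 \preceq \delta$ we get $\lambda + 0 \preceq \lambda + \delta$ by (ii), hence $\gamma \prec \lambda \preceq \lambda + \delta$. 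The same argument, restricted to $\g$, shows $\m = \m_{\succ\gamma}$ is a $B$-submodule of $\g$ containing $\b$ (this was asserted after \eqref{e:m_beta} and is reused here), and restricted to $\tilde\g$ — noting $\tilde\g$ is a $G$-submodule of $\bar\g$, hence a $B$-submodule, and $\tilde\g_\lambda = \tilde\g \cap \bar\g_\lambda$ — shows $\tilde\m_{\succ\gamma}$ is a $B$-submodule of $\tilde\g$.

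I expect the main (and only) obstacle to be the bookkeeping around two distinct preorders: the $B$-preorder $\preceq$ lives on $X(T)$ and governs $\Phi$ and the weights of $V$, whereas the submodules $\bar\m$, $\tilde\m$ are defined using weights in $\Psi(\bar\g) = \{\nu - \mu : \nu,\mu \in \Psi(V)\} \subseteq X(T)$; one must be careful that the condition "$\lambda \succ \gamma$" is interpreted in the single preorder $\preceq$ on $X(T)$ throughout, and that the ordering of the basis vectors $v_i$ is compatible with $\preceq$ so that $\bar B$ (upper triangular matrices) really does consist of the weight spaces $\bar\g_\lambda$ with $\lambda$ "nonnegative". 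Once this consistency is pinned down, properties (i)--(iii) fall out as above with no further difficulty, and the stronger assertion in the lemma (that these specific $\bar B$, $\bar\m$, $\tilde\m$ witness compatibility) follows at once.
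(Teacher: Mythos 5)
Your proposal is correct and follows essentially the same route as the paper: both arguments reduce to the observation that the adjoint action of $\bar B$ on $\bar\g = \gl_n$ only moves a $T$-weight vector to weights that are $\succeq$ the original one (the paper checks this by direct matrix multiplication, showing $b e_{ij} b^{-1}$ involves only $e_{kl}$ with $k \le i$, $j \le l$ and hence $\nu_i - \nu_j \preceq \nu_k - \nu_l$, while you phrase it via generation of $\bar B$ by the diagonal torus and positive root subgroups), after which (i)--(iii) are immediate. The only cosmetic slips are that $\bar B$ is generated by the full diagonal torus $\bar T$ of $\GL_n$ (not just $T$) together with the positive root subgroups --- harmless, since $\bar T$ preserves each $T$-weight space $\bar\g_\lambda$ --- and that the appeal to good characteristic is unnecessary here.
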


\begin{proof}
It is clear that $\bar \m \supseteq \bar \b$, and that $\bar \m = \m \oplus \tilde \m$.  The main property to prove
is that $\bar \m$ is a $\bar B$-submodule of $\bar \g$; from this it follows easily that $\tilde \m$ is a $B$-submodule of $\tilde \g$.

Through the basis $\{v_1,\dots,v_n\}$ of $V$ we have our
identification of $\bar \g$ with $\gl_n(k)$ and we write $\{e_{ij} \mid 1 \le i,j \le n\}$
for its standard basis.  For $\lambda \in \Psi(\bar \g)$, we have
$\bar \g_\lambda = \langle e_{ij} \mid \lambda = \nu_i - \nu_j \rangle$.
By considering how matrices are multiplied together, we see that it suffices to prove the following:
Let $1 \le i,j,k,l \le n$ and suppose that $k \le i$ and $j \le l$, then $\nu_i - \nu_j \preceq \nu_k - \nu_l$.
Well, since $k \le i$, we have $\nu_i \preceq \nu_k$, and since $j \le l$, we have $\nu_l \preceq \nu_j$.  Therefore, $-\nu_j \preceq -\nu_l$, and so
$\nu_i-\nu_j \preceq \nu_k-\nu_l$, which is what we require, so the proof is complete.
\end{proof}

\subsection{Springer isomorphisms}
\label{subsec:springer}

Denote by $\mathcal{U}$ the unipotent variety of $G$
and by $\mathcal{N}$ the nilpotent variety of $\g$.
In \cite{Springer}, Springer showed
that, if $G$ is simply connected,
then there is a $G$-equivariant
isomorphism of varieties
\[
\phi: \mathcal{U} \rightarrow \mathcal{N};
\]
see also \cite[Thm.\ 3.12]{SPST}.
Such an isomorphism is called a \emph{Springer isomorphism}.
We note that for $G$ not necessarily simply connected, we still obtain
a $G$-equivariant bijective morphism of varieties $\phi: \mathcal{U} \rightarrow \mathcal{N}$,
which is an isomorphism if the covering map from the simply connected cover of $G$ is separable.
See, for example \cite[\S 2]{GOconj}, for an overview of the theory of Springer isomorphisms.

We recall the construction of a Springer isomorphism due to
Bardsley and Richardson \cite[\S 9]{BARI}, in the case where Assumption \ref{assm:1} holds.
For that recall the $G$-module decomposition of $\bar \g = \gl(V)$
from \eqref{eq:redpair}.
Let $\iota: G \to \GL(V)$ be the embedding of $G$ into $\GL(V) \subseteq \gl(V)$
and $\pi: \gl(V) \to \g$ be the canonical projection.
It follows from the proof of \cite[Lem.\ 9.3.1]{BARI}
that under the assumptions made,
we obtain a Springer isomorphism
by restricting $\pi \circ \iota$ to the unipotent variety $\mathcal{U}$ of $G$,
i.e.\ the image of $\pi \circ \iota|_{\mathcal U}$ is precisely $\mathcal N$, and this
restriction is an isomorphism of varieties onto $\mathcal N$.

\section{The coadjoint action of $U$ on $\u^*$}
\label{sec:coad}

We continue to make Assumption \ref{assm:1}, so that there
is a nondegenerate, symmetric, invariant, bilinear form on $\g$.  As
explained in the introduction this allows us to identify $\u^*$
with $\g/\b$ as $U$-modules.
Let $\m \supseteq \b$ be a compatible $B$-submodule of $\g$, and let $\bar \m$, $\tilde \m$
and $\bar B$ be as in Definition \ref{D:compatible}, and write $\bar U$ for the unipotent radical of $\bar B$.
Then we have an isomorphism of $B$-modules from \eqref{e:id}, which we use throughout this section.

In this section, we prove Theorems \ref{thm:sepcoad} and \ref{thm:relconn} regarding the action of
$U$ on $\g/\m$.  Our
strategy is to adapt and extend some arguments of Springer and Steinberg from \cite[Ch.\ I \S5]{SPST}.

Let $X + \m \in \g / \m$.
The $U$-orbit of $X+\m$ in $\g / \m$ is denoted by $U \cdot X+\m$ and
we write $\bar U \cdot X + \bar \m$
for the $\bar U$-orbit of $X+ \bar \m$ in $\bar \g / \bar \m$.
We define
\[
V(X,\bar \m) := \{u \cdot X - X + \bar \m \in \bar \g/\bar \m \mid u \in \bar U\}.
\]

\begin{lem}
\label{lem:dfsurj}
Define the map $f : \bar U \to V(X,\bar \m)$ by
\[
f(u) = (u \cdot X) - X + \bar \m.
\]
Then $f$ is separable.
\end{lem}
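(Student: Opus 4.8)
The plan is to prove separability of $f : \bar U \to V(X,\bar\m)$ by working out its differential at the identity and showing that the map is dominant with differential of the expected rank, or more directly by exhibiting $V(X,\bar\m)$ as a quotient and checking the differential is surjective onto the tangent space of the image. First I would note that $\bar U$ is a unipotent group over a field whose characteristic is $0$ or good for $G$, and that $V(X,\bar\m)$ is by definition the image of the orbit map $u \mapsto u\cdot X - X + \bar\m$ composed with translation, so $V(X,\bar\m)$ is a locally closed subvariety of the affine space $\bar\g/\bar\m$; in fact it is the image of $\bar U$ under a morphism, hence constructible, and being a $\bar U$-quasi-orbit it is smooth and irreducible. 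The key computation is the differential $d f_1 : \u(\bar{}) = \bar\u \to \bar\g/\bar\m$, which sends $Y \in \bar\u$ to $[Y,X] + \bar\m$ (here I am writing $\bar\u = \Lie(\bar U)$; the differential of $u\mapsto u\cdot X$ at $1$ is $Y \mapsto [Y,X]$ for the adjoint action, and translation by $-X$ has trivial differential). So $d f_1$ is the map $Y \mapsto (\ad Y)(X) \bmod \bar\m$.

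The second step is to identify the tangent space to $V(X,\bar\m)$ at the point $f(1) = 0 + \bar\m$ and show $d f_1$ surjects onto it. Since $\bar U$ acts on $V(X,\bar\m)$ transitively (by the cocycle-type identity $u'\cdot(u\cdot X - X) + (u'\cdot X - X) = u'u\cdot X - X$, so the action $u' \star (Z+\bar\m) := u'\cdot Z + u'\cdot X - X + \bar\m$ is transitive with $f$ the orbit map through $0+\bar\m$), the image $V(X,\bar\m)$ is a homogeneous space and $f$ is an orbit map. For an orbit map of a unipotent group in characteristic $0$ or good characteristic, one expects separability; the cleanest route is to invoke that $\bar U \cong \bar\u$ as varieties via a suitable isomorphism (e.g. the Springer/exponential isomorphism available since $\bar G = \GL(V)$ and we may use the ordinary matrix exponential $\exp$ on the nilpotent group $\bar\u$, valid as $\bar U$ is a group of unitriangular matrices) and then argue that the orbit map becomes a polynomial map whose fibers are cosets of a closed connected subgroup, with the differential computation above showing the image of $d f_1$ has dimension $\dim \bar\u - \dim\Ker(d f_1) = \dim\bar\u - \dim\,\bar\c$, where $\bar\c = \{Y \in \bar\u : [Y,X] \in \bar\m\}$. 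One then checks $\dim V(X,\bar\m) = \dim\bar U - \dim C_{\bar U}(X+\bar\m)$ and that $\Lie(C_{\bar U}(X+\bar\m)) = \bar\c$, the latter being exactly the place where good/zero characteristic and the unitriangular structure are used — this is the standard argument that the scheme-theoretic stabilizer of a unipotent group acting on an affine space is smooth in these characteristics.

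The main obstacle I anticipate is precisely the identification $\Lie(C_{\bar U}(X+\bar\m)) = \bar\c_{\bar\u}(X+\bar\m)$, i.e. showing the stabilizer is smooth of the expected dimension, equivalently that $f$ has separable fibers; the inclusion $\Lie(C_{\bar U}(X+\bar\m)) \subseteq \bar\c$ is automatic, and the reverse inclusion (no extra infinitesimal stabilizers) is what requires the characteristic hypothesis. I would handle this by using the matrix exponential on $\bar\u$: for $Y \in \bar\c$, the one-parameter subgroup $t \mapsto \exp(tY)$ can be computed explicitly (it is polynomial in $t$ since $Y$ is nilpotent), and one shows $\exp(tY)\cdot X - X \in \bar\m$ for all $t$ by a filtration/induction argument on the lower central series of $\bar\u$, combined with the fact that in good characteristic the relevant binomial coefficients appearing in $\exp$ are invertible. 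Alternatively, since $\bar G = \GL(V)$, one can cite the classical fact for $\GL_n$ directly. Having established smoothness of the stabilizer, separability of $f$ follows since then $d f_1$ is surjective onto $T_{0+\bar\m}V(X,\bar\m)$, and by $\bar U$-equivariance the differential is surjective at every point, which for a dominant morphism onto a smooth variety gives separability. I would close by remarking that this lemma is the key technical input used to prove Theorems~\ref{thm:sepcoad} and~\ref{thm:relconn} via the decomposition \eqref{e:id} and intersection with $G$.
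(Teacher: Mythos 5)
Your reduction of the lemma to the smoothness of the stabilizer is exactly the paper's: you compute $(df)_1(Y) = [Y,X]+\bar\m$, note $\dim T_0(V(X,\bar\m)) = \dim\bar U - \dim C_{\bar U}(X+\bar\m)$ and $\dim\Imag(df)_1 = \dim\bar\u - \dim\c_{\bar\u}(X+\bar\m)$, and correctly identify that everything hinges on $\dim C_{\bar U}(X+\bar\m) = \dim\c_{\bar\u}(X+\bar\m)$. However, your proposed proof of that equality has a genuine gap: you want to exponentiate elements of $\c_{\bar\u}(X+\bar\m)$ via the matrix exponential on $\bar\u\subseteq\gl(V)$, but $\exp(tY)=\sum_k t^kY^k/k!$ requires inverting $k!$ for $k$ up to the nilpotency degree of $Y$ in $\GL(V)$, and the standing hypothesis is only that $p$ is good for $G$ (a condition on the root system), not that $p>\dim V$ or $p\ge h$. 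For $V$ the natural module of a large classical group, or $V=\g$ for exceptional types, one certainly has $p\le\dim V$ for good $p$, so the exponential is simply not defined; the claim that ``in good characteristic the relevant binomial coefficients are invertible'' is false. The fallback of ``citing the classical fact for $\GL_n$'' is also not available in this form, since the action here is on the quotient $\bar\g/\bar\m$ rather than on $\bar\g$ itself.

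The paper closes this gap with an elementary identity that needs no characteristic assumption at all, and this is precisely the payoff of having passed to $\bar G=\GL(V)$: for $u\in\bar U$ one has $uXu^{-1}+\bar\m = X+\bar\m$ if and only if $uX - Xu \in\bar\m$ (using that $\bar\m$ is stable under right multiplication by $\bar B$), and $uX-Xu = (u-1)X - X(u-1) = [u-1,X]$. Hence $C_{\bar U}(X+\bar\m) = 1+\c_{\bar\u}(X+\bar\m)$ exactly, not merely up to dimension, and the two sides have equal dimension because $u\mapsto u-1$ is an isomorphism of varieties. In other words, the role you assign to $\exp$ is played by the translation $u\mapsto u-1$, which is available in every characteristic and requires none of the lower-central-series induction you sketch. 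With that substitution your argument goes through; without it, the key step fails for small good primes.
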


\begin{proof}
We show that the differential
\[
(df)_1:
\bar \u \to T_0(V(X,\bar \m))
\]
of $f$ is surjective.
A direct calculation shows that $(df)_1$
maps $Y \in \bar \u$ to $[Y,X] + \bar \m$. We have
\[
\dim T_0(V(X,\bar \m))
= \dim (\bar U \cdot X + \bar \m)
= \dim \bar U - \dim C_{\bar U}(X + \bar \m),
\]
while
\[
\dim \Imag(df)_1 = \dim \bar \u - \dim \Ker(df)_1
= \dim \bar U - \dim \c_{\bar \u}(X + \bar \m).
\]
Thus, it suffices to show that
$\dim C_{\bar U}(X + \bar \m) = \dim \c_{\bar \u}(X + \bar \m)$.
We claim that in fact
\begin{equation} \label{eq:cu}
C_{\bar U}(X + \bar \m) = 1 + \c_{\bar \u}(X + \bar \m).
\end{equation}
If $u \in C_{\bar U}(X + \bar \m)$,
then $u X u^{-1} + \bar \m = X + \bar \m$.
Since $\bar \m$ is a $\bar B$-submodule of $\bar \g$, we have that $\bar \m$ is
stable under right multiplication by elements of $\bar B$.
Thus right multiplication by $u$ and subtraction of $Xu$ on both sides yields
\begin{align*}
\bar \m & = u X - X u + \bar \m \\
 & = (u - 1) X - X (u -1) + \bar \m \\
 & = [u - 1, X] + \bar \m.
\end{align*}
So $C_{\bar U}(X + \bar \m) \subseteq 1 + \c_{\bar \u}(X + \bar \m)$.
The reverse inclusion can be proved similarly,
and so \eqref{eq:cu} follows.
\end{proof}

\begin{lem}
\label{lem:separcent}
$\Lie C_U(X + \m) = \c_\u(X + \m)$.
\end{lem}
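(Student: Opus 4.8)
The plan is to deduce this from Lemma~\ref{lem:dfsurj} by passing from $\bar\g/\bar\m$ down to $\g/\m$ via the $B$-module splitting \eqref{e:id}. First I would observe that there is always an inclusion $\Lie C_U(X+\m) \subseteq \c_\u(X+\m)$: this is the standard fact that the Lie algebra of a centralizer is contained in the centralizer in the Lie algebra, valid for the action of $U$ on the $U$-module $\g/\m$. So the content is the reverse inclusion, equivalently the equality of dimensions $\dim C_U(X+\m) = \dim \c_\u(X+\m)$.

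Next I would relate the $U$-action on $\g/\m$ to the $\bar U$-action on $\bar\g/\bar\m$. By \eqref{e:id} we have $\bar\g/\bar\m \cong \g/\m \oplus \tilde\g/\tilde\m$ as $B$-modules, and in particular as $U$-modules. Writing $X + \bar\m$ and using that $U = \bar U \cap G$ stabilizes each summand, one checks that $C_{\bar U}(X+\bar\m) \cap U = C_U(X+\m) \cap C_U(\text{image of } X \text{ in } \tilde\g/\tilde\m)$; but since the $\tilde\g/\tilde\m$-component of $X+\bar\m$ is fixed by all of $U$ (as $U$ acts on $\g$ and the decomposition is $G$-stable, the projection of $u\cdot X - X$ to $\tilde\g$ lies in $\tilde\m$ — indeed $u \cdot X - X \in [\bar\u, \g] \subseteq$ the relevant spaces, and more directly: for $u \in U \subseteq G$ we have $u\cdot X \in \g$, so $u \cdot X - X \in \g$, hence its image in $\tilde\g/\tilde\m$ is zero). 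Therefore $C_U(X+\m) = C_{\bar U}(X+\bar\m) \cap U$, and similarly on the Lie algebra level $\c_\u(X+\m) = \c_{\bar\u}(X+\bar\m) \cap \u$. Now apply \eqref{eq:cu} from the proof of Lemma~\ref{lem:dfsurj}: $C_{\bar U}(X+\bar\m) = 1 + \c_{\bar\u}(X+\bar\m)$. Intersecting both sides with $U = 1 + \u$ (using that $U$ is the unipotent radical and $\bar U = 1 + \bar\u$ with $\u = \bar\u \cap \g$) gives $C_U(X+\m) = 1 + (\c_{\bar\u}(X+\bar\m) \cap \u) = 1 + \c_\u(X+\m)$. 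Taking Lie algebras yields $\Lie C_U(X+\m) = \c_\u(X+\m)$.

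The main obstacle I expect is making the intersection argument fully rigorous: one must be careful that $\u = \bar\u \cap \g$ inside $\bar\g$ (this uses $B = \bar B \cap G$ from Definition~\ref{D:compatible}(i), hence $U = \bar U \cap G$ and correspondingly at the Lie algebra level in good characteristic), and that the identification $C_{\bar U}(X+\bar\m) = 1 + \c_{\bar\u}(X+\bar\m)$ behaves well under intersecting with the closed subgroup $U$ and its Lie algebra $\u$. The key point that makes this clean is that $1 + \bar\u \to \bar\g/\bar\m$, $u \mapsto u - 1 + \bar\m$, intertwines the group-level and Lie-algebra-level descriptions as in \eqref{eq:cu}, and restricting this to $1 + \u$ lands in $\g/\m \subseteq \bar\g/\bar\m$ under \eqref{e:id}, so no information is lost in the passage.
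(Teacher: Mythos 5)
Your reduction to the ambient group is fine as far as it goes: the identities $C_U(X+\m)=C_{\bar U}(X+\bar\m)\cap U$ and $\c_\u(X+\m)=\c_{\bar\u}(X+\bar\m)\cap\u$ are correct, since for $u\in U\subseteq G$ and $Y\in\u$ the elements $u\cdot X-X$ and $[Y,X]$ lie in $\g$, and $\g\cap\bar\m=\m$ by Definition~\ref{D:compatible}(iii). The fatal gap is the last step, where you intersect $C_{\bar U}(X+\bar\m)=1+\c_{\bar\u}(X+\bar\m)$ with ``$U=1+\u$''. The identity $U=1+\u$ inside $\GL(V)$ is false outside type $A$ with the natural representation: if it held, $1+\u$ would be a subgroup, and since $(1+Y)(1+Y')=1+Y+Y'+YY'$ this would force $\u$ to be closed under the \emph{associative} product of $\gl(V)$, which fails already for $\mathrm{Sp}_4$ (the symmetric part $e_\alpha e_\beta+e_\beta e_\alpha$ of a product of root vectors generally has a nonzero component in $\tilde\g$). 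For $u\in U$ one only knows $u-1\in\bar\u$; its $\g$-component is $\phi(u)=\pi(u)$, the Bardsley--Richardson Springer isomorphism of \S\ref{subsec:springer}, and $\phi$ is \emph{not} the map $u\mapsto u-1$. So you cannot conclude $C_U(X+\m)=1+\c_\u(X+\m)$ --- the right-hand side need not even be contained in $G$. (The paper does eventually prove $C_U(X+\m)=\phi^{-1}(\c_\u(X+\m))$, but only in the proof of Theorem~\ref{thm:relconn}, and that argument takes the present lemma as an input, so it cannot be invoked here without circularity.)

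The paper sidesteps this by never identifying $U$ with an affine subspace of $\gl(V)$: it works with tangent spaces. With $f(u)=u\cdot X-X+\bar\m$ as in Lemma~\ref{lem:dfsurj}, one has $T_0(f(\bar U))=\ad(X)(\bar\u)+\bar\m$ by that lemma, and the heart of the argument is the purely linear identity
\[
(\ad(X)(\bar\u)+\bar\m)\cap\g/\m=\ad(X)(\u)+\m,
\]
obtained by splitting $\bar Y\in\bar\u$ into its $\g$- and $\tilde\g$-components and using that $[X,\cdot\,]$ preserves the decomposition \eqref{eq:redpair}. Sandwiching $T_0(f(U))$ between $\ad(X)(\u)+\m$ and this intersection yields $\dim(U\cdot X+\m)=\dim(\ad(X)(\u)+\m)$, which is exactly the dimension equality the lemma asserts. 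In other words, the intersection you want to perform is legitimate at the level of tangent spaces (where only the linear decomposition $\bar\g=\g\oplus\tilde\g$ is needed) but not at the level of subsets of $\GL(V)$; to repair your argument you would have to replace $u\mapsto u-1$ by $u\mapsto\pi(u)$, at which point you are back to proving the dimension count rather than reading it off.
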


\begin{proof}
Since $\Lie C_U(X + \m) \subseteq \c_\u(X + \m)$,
it suffices to show that the dimensions coincide.
This is equivalent to showing that $\dim (U \cdot X + \m) = \dim ( [\u,X] + \m)$, or equivalently
\begin{equation} \label{eq:dimad}
\dim (U \cdot X + \m) = \dim (\ad(X)(\u) + \m).
\end{equation}
Consider the restriction of $f$ from Lemma \ref{lem:dfsurj} to $U$.
Through \eqref{e:id}, we have that the image of $f|_U$ is contained
in $\g/\m$, and thus also $T_0(f(U)) \subseteq \g / \m$.
We prove \eqref{eq:dimad}
by verifying the following sequence of inclusions:
\begin{equation}
\begin{aligned} \label{eq:tangent}
T_0(f(U))
 & \subseteq T_0(f(\bar U)) \cap T_0(\g / \m) \\
 & = (df)_1(\bar \u) \cap \g / \m \\
 & = (\ad(X)(\bar \u) + \bar \m) \cap \g / \m \\
 & = \ad(X)(\u) + \m \\
 &  \subseteq T_0(f(U)).
\end{aligned}
\end{equation}
The initial inclusion is immediate,
whilst the first equality is just Lemma \ref{lem:dfsurj} and the second
is the definition of $(df)_1$.

To see why the third equality holds,
consider $\bar{Y} = Y + \tilde{Y} \in \bar \u$
with $Y \in \u$ and $\tilde{Y} \in \u \cap \tilde \g$.
Then
\[
[X,\bar{Y}] + \bar \m = [X, Y + \tilde{Y}] + \bar \m
= [X, Y] + [X, \tilde{Y}] +  \bar \m,
\]
where, thanks to the $G$-module decomposition of $\g$
from \eqref{eq:redpair}, we have that
$[X, Y]$ belongs to $\g$ and
$[X, \tilde{Y}]$ lies in $\tilde \g$.
Thus through \eqref{e:id}, we have that $[X, Y] + \m \in \g / \m$
and $[X, \tilde{Y}] + \tilde \m \in \tilde \g / \tilde \m$.
In particular,
if $[\bar{Y}, X] + \bar \m \in \g / \m$,
then $[\bar{Y},X] +\bar \m = [Y,X] + \bar \m$.  This shows that
$(\ad(X)(\bar \u) + \bar \m) \cap \g / \m \subseteq \ad(X)(\u) + \m$, and the
reverse inclusion is clear.

The last inclusion holds
because $\ad(X)(\u)  + \m = (df)_1(\u)$.

Finally, \eqref{eq:tangent} yields
that $T_0(f(U)) = \ad(X)(\u) + \m$,
and, clearly, we also have that $\dim f(U) = \dim (U \cdot X + \m)$,
which together imply \eqref{eq:dimad}.
\end{proof}

We are now in a position to prove our main results.

\begin{proof}[Proof of Theorem \ref{thm:sepcoad}]
By a standard argument this is equivalent to Lemma \ref{lem:separcent}.
\end{proof}

\begin{proof}[Proof of Theorem \ref{thm:relconn}]
Let $\phi: \mathcal{U} \rightarrow \mathcal{N}$ be the
Springer isomorphism for $G$ obtained from the Bardsley--Richardson
construction, as in Section \ref{subsec:springer}.
We prove the statement by showing that
$C_U(X + \m) = \phi^{-1}(\c_\u(X + \m))$.
Since $\c_\u(X + \m)$ is a vector space and thus is irreducible as
a variety, so is  $\phi^{-1}(\c_\u(X + \m))$.
Then it follows that $C_U(X + \m)$ is connected.

We identify $U$ as a subgroup of $\bar U$.
Thus, if $u \in C_U(X + \m)$,
then $u \in C_{\bar U}(X + \bar \m)$, so $u X u^{-1} + \bar \m = X + \bar \m$.
Since $\bar \m$ is a $\bar B$-submodule of $\bar \g$, it is invariant under
right multiplication
by elements in $\bar U$. It follows that $u X + \bar \m = X u + \bar \m$.
Therefore, $[u, X] + \bar \m = \bar \m$.
The direct sum decomposition \eqref{eq:redpair}
of $\bar \g$ and the definition of $\phi$ gives
\[
u = \phi(u) + \tilde{Y}
\]
with $\tilde{Y} \in \tilde \g$.  So we have
$$
[u, X] + \bar \m = [\phi(u), X] + [\tilde{Y}, X] + \bar \m
$$
where $[\phi(u), X] \in \g$ and $[\tilde{Y}, X] \in \tilde \g$.
Thus, using \eqref{e:id}, we obtain
$[\phi(u), X] \in \m$ and $[\tilde{Y}, X] \in \tilde \m$.
Together with the fact that $\phi$ restricts
to an isomorphism $U \cong \u$
(see for example \cite[Cor.\ 2.2]{GOconj}),
this implies that $\phi(u) \in \c_\u(X + \m)$.

Consequently, $\phi(C_U(X + \m)) \subseteq \c_\u(X + \m)$.
Since $\phi(C_U(X + \m))$ is a closed subvariety of $\c_\u(X + \m)$,
both have the same dimension,
by Lemma \ref{lem:separcent}, and $\c_\u(X + \m)$ is irreducible,
we must have that $\phi(C_U(X + \m)) = \c_\u(X + \m)$.
Finally, as $\phi$ is
an isomorphism,
$C_U(X + \m) = \phi^{-1}(\c_\u(X + \m))$, as desired.
\end{proof}

We now explain how Theorems \ref{thm:sepcoad} and \ref{thm:relconn} allow
us to give a parametrization of the coadjoint $U$-orbits in $\u^*$; this is
an analogue of the parametrization of the adjoint $U$-orbits in $\u$
described in \cite{GOROconj}.  We first fix a $B$-preorder $\preceq$ on $X(T)$,
as defined in Definition \ref{D:prec},
and assume that $\preceq$ restricts to a total order on $\Phi$.
Now we enumerate $\Phi^+ = \{\beta_1,\dots,\beta_N\}$ so that
$\beta_i \prec \beta_j$, whenever $i < j$.
To ease notation, we also define $\gamma_i = -\beta_{N+1-i}$, so that
$\Phi^- = \{\gamma_1,\dots,\gamma_N\}$.

We define subspaces
$\m_i$ of $\g$, for each $0 \leq i \leq N$, by setting
\begin{equation} \label{e:m_i}
\m_i := \m_{\succ \gamma_i} = \left(\bigoplus_{j=i+1}^{N} \g_{\gamma_j}\right) \oplus \b \subseteq \g.
\end{equation}
Then each $\m_i$ contains $\b$ and is a compatible $B$-submodule of $\g$, by Lemma \ref{lem:barem}.
Thus the quotient $\g / \m_i$ is a $B$-module as well. Furthermore, we have
\[
\b = \m_N  \subset \ldots  \subset \m_i \subset \ldots  \subset \m_0 = \g
\]
and $\dim \m_i / \m_{i-1} = 1$ for $i = 1, \ldots, N$.

Using Theorem \ref{thm:relconn} one can prove
the following lemma.  It is an analogue of \cite[Lem.\ 5.1]{GOconj}, and the proof
follows the same line of arguments; we include it for the reader's convenience.  In the statement
$e_\beta$ denotes a generator of the root space $\g_\beta$, for $\beta \in \Phi$.

\begin{lem}
\label{lem:raminco}
For $0 \leq i \leq N$, let $\m_i$ be defined as in \eqref{e:m_i}.
Let $X + \m_{i-1}$ be an element in $\g / \m_{i-1}$.
Denote $X + k e_{\gamma_i} + \m_i = \{ X + \lambda e_{\gamma_i} + \m_i \mid \lambda \in k \} \subseteq \g / \m_i$. Then either
\begin{enumerate}
\item[(I)] all elements of $X+k e_{\gamma_i}+\m_i$ are $U$-conjugate, or
\item[(R)] no two elements of $X+k e_{\gamma_i}+\m_i$ are $U$-conjugate.
\end{enumerate}
\end{lem}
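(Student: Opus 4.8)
The plan is to mimic the dichotomy argument from \cite[Lem.\ 5.1]{GOconj}, now leaning on Theorem \ref{thm:relconn} in place of the connectedness statement for the adjoint action. First I would consider the action of $U$ on $\g/\m_i$ and restrict attention to the line $L := X + k e_{\gamma_i} + \m_i$ inside $\g/\m_i$. The key structural observation is that the projection $\g/\m_i \to \g/\m_{i-1}$ is $U$-equivariant and collapses all of $L$ to the single point $X+\m_{i-1}$; hence $U$ preserves the preimage of $X+\m_{i-1}$, which contains $L$, and the stabilizer $C_U(X+\m_{i-1})$ acts on this preimage. So the natural thing is to let $H := C_U(X+\m_{i-1})$ act on $L$ (an affine line), and to understand the orbits of this action.

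Next I would analyze the $H$-action on $L \cong k$. Since $\m_i/\m_{i-1}$ is one-dimensional, spanned by the image of $e_{\gamma_i}$, an element $u \in H$ sends $X + \lambda e_{\gamma_i} + \m_i$ to $X + \lambda e_{\gamma_i} + \m_i$ shifted by the $\m_i/\m_{i-1}$-component of $u\cdot X - X$, which is independent of $\lambda$ (the dependence on $\lambda$ lands in $\m_{i-1}/\m_i = 0$ of that quotient, or more precisely the correction term from $u$ acting on $\lambda e_{\gamma_i}$ lies in $\m_i$ modulo $\m_i$). Thus $H$ acts on $L$ by translations: there is an additive map, i.e.\ a morphism of algebraic groups $\tau : H \to (k,+) = \mathbb{G}_a$ such that $u \cdot (X+\lambda e_{\gamma_i}+\m_i) = X + (\lambda + \tau(u)) e_{\gamma_i} + \m_i$. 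The image $\tau(H)$ is a subgroup of $\mathbb{G}_a$. By Theorem \ref{thm:relconn}, $H$ is connected, so $\tau(H)$ is a connected subgroup of $\mathbb{G}_a$, hence either trivial or all of $\mathbb{G}_a$. In the first case no two points of $L$ are $H$-conjugate along $L$; in the second all points of $L$ are $H$-conjugate.

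Finally I would upgrade the statement from $H$-conjugacy within $L$ to $U$-conjugacy. If two elements of $L$ are $U$-conjugate, the conjugating element must fix $X+\m_{i-1}$ (since the images in $\g/\m_{i-1}$ agree), hence lies in $H$; so $U$-conjugacy of two points of $L$ is the same as $H$-conjugacy, and $H$-conjugacy of $Y, Y' \in L$ forces $Y' \in H\cdot Y$, which meets $L$ — and one checks using the translation description that $H\cdot Y \cap L = Y + \tau(H) e_{\gamma_i}$. This gives exactly the dichotomy: case (I) when $\tau(H) = \mathbb{G}_a$, case (R) when $\tau(H) = 0$. The main obstacle, and the step requiring genuine care rather than bookkeeping, is verifying that $H$ really does act on $L$ by pure translations with no $\lambda$-dependent distortion — that is, that the displacement $u\cdot(X+\lambda e_{\gamma_i}+\m_i) - (X+\lambda e_{\gamma_i}+\m_i)$ is constant in $\lambda$; this uses that $u\cdot(\lambda e_{\gamma_i}) - \lambda e_{\gamma_i} = \lambda(u-1)\cdot e_{\gamma_i}$ has its $\m_i/\m_i$-relevant component vanishing because $(u-1)\cdot e_{\gamma_i} \in \m_i$ already, which in turn follows from $\m_i \supseteq \b$ being a $B$-submodule together with the ordering convention $\gamma_i \prec \gamma_j$ for $j > i$ that puts all of $\u \cdot e_{\gamma_i}$ into $\m_i$. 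Once this is pinned down, connectedness of $H$ via Theorem \ref{thm:relconn} does the rest.
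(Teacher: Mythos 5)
Your proposal is correct and follows the same overall strategy as the paper: both arguments reduce to the action of $H = C_U(X+\m_{i-1})$ on the affine line $X + ke_{\gamma_i} + \m_i$ (using, as you do, that any element of $U$ conjugating two points of the line must fix $X+\m_{i-1}$), and both draw their force from the connectedness of $H$ supplied by Theorem \ref{thm:relconn}. The only divergence is the final mechanism. The paper observes that the $H$-orbit of a point of the line is contained in the line, is closed there (orbits of unipotent groups on affine varieties are closed, citing \cite[Prop.\ 4.10]{Bo}), and is connected; a closed connected subset of the affine line is a point or everything. You instead verify that $H$ acts on the line by translations via an additive homomorphism $\tau \colon H \to (k,+)$, whose image is a connected closed subgroup of the one-dimensional group $(k,+)$ and hence trivial or everything. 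Your route requires the small computation that $\Ad(u)e_{\gamma_i} \equiv e_{\gamma_i} \pmod{\m_i}$ — which does hold, since $\m_{i-1}$ is a $B$-submodule and every weight $\gamma_i+\beta$ with $\beta$ a nonzero sum of positive roots satisfies $\gamma_i + \beta \succ \gamma_i$, so the distortion lands in $\m_i$ — but in exchange it avoids invoking the Kostant--Rosenlicht closed-orbit theorem and yields slightly more information, namely that each $H$-orbit in the line is literally a coset $Y + \tau(H)e_{\gamma_i}$.
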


\begin{proof}
Let $\lambda \in k$, and consider $\mathcal Y = C_U(X+\m_{i-1}) \cdot (X+\lambda e_{\gamma_i}+ \m_i)$.  By
definition of $C_U(X+\m_{i-1})$ and $X + k e_{\gamma_i} + \m_i$, we have that $\mathcal Y \subseteq X + k e_{\gamma_i} + \m_i$.  Since $C_U(X+\m_{i-1})$ is unipotent, we have that $\mathcal Y$
is closed in $X + k e_{\gamma_i} + \m_i$; see for example \cite[Prop.\ 4.10]{Bo}.  Further,
$\mathcal Y$ is connected, because
$C_U(X+\m_{i-1})$ is connected.  Therefore, $\mathcal Y$
is a closed and connected
subset of $X + k e_{\gamma_i} + \m_i \cong k$.  As such either $\mathcal Y$
is equal to $X+k e_{\gamma_i}+\m_i$, so that (I) holds, or
$\mathcal Y$ is equal to $X+\lambda e_{\gamma_i}+ \m_i$ for all $\lambda \in k$, so that (R) holds.
\end{proof}

In case (I) of the above lemma, we say that $i$ is an \emph{inert point} of $X + \m_i$, whereas
in case (R) we call $i$ a \emph{ramification point} of $X + \m_i$.
We say that $X + \m_i = \sum_{j=1}^i a_j e_{\gamma_j} + \m_i \in \g/\m_i$
is the \textit{minimal representative}
of its $U$-orbit in $\g/\m_i$ provided
$a_j=0$ whenever $j \leq i$ is an inert point of $X$.
It follows from the counterparts to
\cite[Prop.\ 5.4 and Lem.\ 5.5]{GOconj} in our setting
that each $U$-orbit in $\g/\m_i$
contains a unique minimal representative.
Now we can adapt the arguments in \cite[\S 2]{GMR} to show that the
minimal representatives are partitioned in to families $\mathcal{X}_c$,
where $c$ runs over the indexing set $C = \{\I,\RR_0,\RR_\nn\}^N$. For $c\in\{\I,\RR_0,\RR_\nn\}^N$, we define
$$
(\g/\b)_c=\{X+\b \in \g/\b \mid \text{$j$ is an inert point of $X$ if and only if $c_j=\I$}\}
$$
and
\begin{equation} \label{e:Xc}
\mathcal X_c=\left\{\sum_{j=1}^i a_j e_{\gamma_j}+\b \in (\g/\b)_c \:\: \vline \:\: \text{$a_j=0$
if and only if $c_j\in\{\I,\RR_0\}$}\right\}.
\end{equation}
Then the quasi-affine varieties $\mathcal X_c$ give a parametrization of the $U$-orbits in $\g/\b \cong \u^*$.

The discussion above, along with Theorem \ref{thm:sepcoad}, implies that the
algorithm from \cite[\S 3]{GMR} to calculate this parametrization in case of the adjoint action
of $U$ on $\u$ can be adapted to the coadjoint action of $U$ on $\u^*$.  This is achieved
by determining the quasi-affine varieties $\mathcal X_c$ for all $c$.  As explained in the introduction,
we have implemented this adaptation using {\sf GAP} \cite{GAP}, and used it to calculate the $U$-orbits in $\u^*$, when $G$ is
of rank at most $8$, with the exception of $E_8$.
As noted in the introduction this parametrization becomes rather intricate as the rank grows, because the
equations defining the quasi affine varieties $\mathcal X_c$ become more complicated.

\section{Counting conjugacy classes and characters in $U(q)$}
\label{sec:applic}

Now assume that $G$ is defined and split over the field $\F_q$ where
$q$ is some power of a good prime $p$ for $G$.
We denote by $G(q)$ the group of $\F_q$-rational points of $G$.
We also assume that $B$ and $T$ are chosen to be defined over $\F_q$.
Then $U$ is defined over $\F_q$ and $U(q)$ is a Sylow
$p$-subgroup of $G(q)$.

We continue to use the notation from the previous section, so we
have a fixed $B$-preorder $\preceq$ on $X(T)$ which restricts to a total order on $\Phi$, and enumerate
$\Phi^+$ and $\Phi^-$ accordingly.
Then the $\m_i$ defined in \eqref{e:m_i}
are $F$-stable $B$-submodules.
Thanks to Theorem \ref{thm:relconn}, there are analogues
of  \cite[Prop.\ 6.2 and \ Lem.\ 6.3]{GOconj} for the coadjoint action.
From this, we deduce that the
$U(q)$-orbits in $\u^*(q)$ are in bijection with
the minimal representatives with coefficients in $\F_q$.

The algorithm explained in \cite[\S3]{GMR}
can be adapted to calculate the number
$k(U(q), \u^*(q))$
of coadjoint orbits of $U(q)$ on $\u^*(q)$ from the parametrization
of the coadjoint orbits.  Our algorithm to find a parametrization of the $U$-orbits in $\u^*$
determines the quasi-affine varieties $\mathcal X_c$ as defined in \eqref{e:Xc}.  Then the number of $\F_q$-rational
points in $\mathcal X_c$ is calculated.
We have done this in {\sf GAP} and ran it for all simple groups $G$ of rank at most $8$,
except $E_8$.  We remark that the process of determining $|\mathcal X_c(q)|$
is rather involved and requires calls to {\sf SINGULAR}.
The results of our calculations are presented in
Table \ref{tab:coadjoint} below.  We note that
the results of the calculation show that $|\mathcal X_c(q)|$ is
a polynomial in $q$ for each $c$.
As we have already observed in the introduction,
the values $k(U(q),\u^*(q))$
and the number $k(U(q))$ of $U(q)$-conjugacy classes
in $U(q)$ coincide.

Table \ref{tab:coadjoint} contains the
values of $k(U(q),\u^*(q)) = K(U(q))$
for $G$ of types $E_7$, $B_8$, $C_8$ and $D_8$
as polynomials in $v = q-1$.
Combining these with our previous results
\cite[Thm.\ 1.1]{GOROconj} and
\cite[Thm.\ 1.1]{GMR}, we obtain
Theorem \ref{thm:poly}.
We recall also that for $G$ of type $A_r$, the polynomials
have been calculated up to $r = 12$ and are given in \cite{VLAR}; see also \cite{EV}.

\begin{center}
\renewcommand{\arraystretch}{1.4}
\begin{table}[h!]
\begin{tabular}{|l|l|l|}
\hline
$G(q)$ & $k(U(q),\u^*(q))$ \\
\hline
$E_7$ & $v^{17}+18v^{16}+154v^{15}+839v^{14}+3298v^{13}+10104v^{12}+25702v^{11}+57351v^{10}$ \\*
~ & $+114413v^9+194330v^8+255908v^7+238441v^6+145845v^5+54705v^4+11655v^3$ \\*
~ & $+1281v^2+63v+1$ \\
\hline
$D_8$ & $v^{16}+19v^{15}+175v^{14}+1057v^{13}+4770v^{12}+17192v^{11}+50639v^{10}+119410v^9$ \\*
~ & $+213853v^8+274244v^7+239428v^6+136164v^5+48090v^4+9912v^3+1092v^2$ \\*
~ & $+56v+1$ \\
$B_8/C_8$ & $2v^{17}+40v^{16}+387v^{15}+2422v^{14}+11077v^{13}+39613v^{12}+115125v^{11}+274653v^{10}$ \\*
~ & $+525983v^9+772250v^8+824340v^7+607950v^6+294658v^5+88816v^4+15568v^3$ \\*
~ & $+1456v^2+64v+1$ \\
\hline
\end{tabular}
\caption{$k(U(q))$
as polynomial in $v=q-1$}
\label{tab:coadjoint}
\end{table}
\end{center}

We note that the values of $k(U(q))$ are always polynomials in $v = q-1$ with non-negative integer coefficients.  This is also the case
for the polynomials obtained in \cite{GMR}.
It follows from
\cite[Thm.\ 5.4]{GMR} that
the coefficient of $k(U(q))$
(as a polynomial in $v$)
of degree $1$ equals $|\Phi^+|$ and
that of degree $2$ is also given by combinatorial
data only depending on $\Phi^+$.

It is worth noting that the algorithm is able to compute $k(U(q),\u^*(q))$
for groups for which the algorithm employed in \cite{GMR}
is not able to calculate $k(U(q),\u(q))$.
That is, the coadjoint action seems to be more
favourable for computational purposes
as opposed to the adjoint action.
A similar observation was already made by B\"urgstein and Hesselink in
\cite[\S 1.8]{BUHE} in their study of the $B$-orbits in $\u$ and $\u^*$.

As mentioned in \cite[\S 3]{GMR}, there are situations where the
algorithm might carry out implicit divisions by certain primes, and then the results
of the calculation are not necessarily valid for these primes.
Previously this did not cause any problems, as the only primes occurring were bad primes.
However, for the rank 8 cases the prime 3 showed up.  An alternative
run of the program where division by 3 was not allowed verified that the polynomials
are also valid for the prime 3.

E.~A.~O'Brien and W.~Unger have calculated the values of $k(U(q))$
for $q = p = 3$ or $5$ for $B_8$, $C_8$ and $D_8$, and
for $q = p = 5$ for $E_7$.
For this they used an improved implementation of an
algorithm by M.~C. Slattery which calculates the number of
irreducible characters of each degree for a $p$-group, \cite{SLA}.
Their calculations verified our results in these cases.

For $G$ of type $E_7$, we also used
the modified program for the coadjoint action
to calculate $k(U(q),\u^{(1)*}(q))$, where $\u^{(1)}$
is the Lie algebra of the commutator subgroup
$U^{(1)}$ of $U$. For that we used
$\u^{(1)*} \cong \g / (\b \oplus \g_{-\alpha_1} \oplus \ldots \oplus \g_{-\alpha_r})$,
where $\Pi = \{ \alpha_1, \ldots, \alpha_r \}$.
The resulting polynomial is
\begin{align*}  & v^{16}+16v^{15}+121v^{14}+579v^{13}+1983v^{12}+5232v^{11}+11268v^{10}+21028v^9 \\
 & +36019v^8+55895v^7+68476v^6+56196v^5+27571v^4+7393v^3+980v^2+56v+1.
\end{align*}
This extends the results in Table 2 from \cite{GMR}.

The sizes of coadjoint orbits $U(q)$ in $\u^*(q)$ can be determined
in analogy with \cite[Prop.\ 6.4]{GOconj}, as we state and prove below.
For shorter notation, in the statement we write $F$ for an element of $\u^*$, where we are not
making the identification $\u^* \cong \g/\b$, inert points of $F$ are defined
through the identification.

\begin{prop} \label{P:orbitsize}
Let $F \in \u^*(q)$ and let $\mathrm{in}(F)$ denote
the number of inert points of $F$.  Then $\dim U \cdot F = \mathrm{in}(F)$ and
$|U(q) \cdot F| = q^{\mathrm{in}(F)}$.
Moreover, $\mathrm{in}(F)$ is even.
\end{prop}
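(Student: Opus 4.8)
The plan is to follow the strategy of \cite[Prop.\ 6.4]{GOconj} but keep track of the extra symplectic structure on $\u^*$ that comes from viewing $F$ as a linear functional rather than an element of $\g$. First I would establish the formula $\dim U\cdot F = \mathrm{in}(F)$: working with the filtration $\b = \m_N \subset \cdots \subset \m_0 = \g$ and the corresponding minimal representative, one shows by induction on $i$ that $\dim (U \cdot (F + \m_i)) $ equals the number of inert points among $1, \dots, i$. The inductive step uses Lemma \ref{lem:raminco}: passing from $\m_i$ to $\m_{i-1}$ the dimension of the orbit goes up by exactly one precisely when $i$ is inert (case (I)) and stays the same when $i$ is a ramification point (case (R)), since in case (R) the fibre $F + k e_{\gamma_i} + \m_i$ meets the orbit in a single point. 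Setting $i = 0$ (so $\m_0 = \g$, $\g/\m_0 = 0$, and $F + \m_0$ is the trivial quotient of $F$) — more precisely running the argument down to the bottom of the filtration — gives $\dim U \cdot F = \mathrm{in}(F)$. Then $|U(q) \cdot F| = q^{\mathrm{in}(F)}$ follows from the separability of the orbit map (Theorem \ref{thm:sepcoad}) together with connectedness of the centralizer (Theorem \ref{thm:relconn}) and the standard Lang–Steinberg counting argument, exactly as in the adjoint case; these are the analogues of \cite[Prop.\ 6.2, Lem.\ 6.3]{GOconj} already invoked in this section.

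The genuinely new point is that $\mathrm{in}(F)$ is even, and this is where I would do the real work. The idea is that for a coadjoint orbit the orbit map identifies $U \cdot F$ with a symplectic leaf: the tangent space $T_F(U\cdot F) \cong \u / \c_\u(F)$ carries the non-degenerate alternating form $B_F(Y, Z) := F([Y,Z])$ induced by the Kirillov–Kostant–Souriau form, whose radical is exactly $\c_\u(F)$. Hence $\dim U \cdot F = \dim \u - \dim \c_\u(F)$ is the rank of an alternating bilinear form, and therefore even. To make this rigorous in the present setting I would note that $\c_\u(F)$ — the stabiliser of $F$ for the coadjoint action — coincides with $\{Y \in \u \mid F([Y, \u]) = 0\}$, which is precisely the radical of $B_F$; and $\dim U \cdot F = \dim \u / \c_\u(F)$ by the orbit–stabiliser dimension count (using $\Lie C_U(F) = \c_\u(F)$, the analogue of Lemma \ref{lem:separcent}). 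An alternating form on a space is non-degenerate only in even dimension, so the rank $\dim\u-\dim\c_\u(F)$ is even.

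One subtlety to address is that the statement is phrased for the quotient $\g/\b$ rather than for $\u^*$ directly, and inert points are defined via that identification; so I would first record explicitly that under the $U$-equivariant isomorphism $\u^* \cong \g/\b$ coming from the invariant form, the coadjoint action corresponds to the action studied in Section \ref{sec:coad} with $\m = \b$, and that $[Y,Z]$ for $Y,Z \in \u$ lands in $\u$, so that $F([Y,Z])$ makes sense and the form $B_F$ on $\u/\c_\u(F)$ transports correctly. The main obstacle is making the symplectic argument airtight in positive characteristic: one must be careful that $\c_\u(F)$ really is the radical of $B_F$ and not merely contained in it — but this is immediate from the definition of the coadjoint action, since $Y \cdot F = -F([Y, -])$ vanishes iff $F([Y,\u]) = 0$ — and that the dimension count $\dim U\cdot F = \dim\u - \dim\c_\u(F)$ holds, which is exactly the content of the separability results already proved. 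Once these are in place the parity is a purely linear-algebraic fact about alternating forms and needs no characteristic hypothesis.
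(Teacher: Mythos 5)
Your proposal is correct and follows essentially the same route as the paper: the first two claims are obtained by adapting the adjoint-case arguments of \cite[Lem.\ 5.7, Prop.\ 6.4]{GOconj} via the filtration and Lemma \ref{lem:raminco}, and the parity of $\mathrm{in}(F)$ comes from the nondegenerate alternating form $F([\cdot,\cdot])$ on $\u/\c_\u(F)$, which is exactly the symplectic-form argument the paper gives (using Theorem \ref{thm:sepcoad} to identify the tangent space of the orbit with $\u/\c_\u(F)$). The extra care you take over the radical of the form and the identification $\u^*\cong\g/\b$ only fills in details the paper leaves implicit.
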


\begin{proof}
We can show that $\dim U \cdot F = \mathrm{in}(F)$ in the same way as the
analogous result for the adjoint action is proved in \cite[Lem.\ 5.7]{GOconj},
and $|U(q) \cdot F| = q^{\mathrm{in}(F)}$ can be proved as in \cite[Prop.\ 6.4]{GOconj}.
Since the orbit map $U \to U \cdot F$ is separable, by Theorem \ref{thm:sepcoad}, the
tangent space of $U \cdot F$ at $0$ is identified with $\u/\c_\u(F)$.  It is well-known
that there is a
symplectic form $\omega$ on $\u/\c_\u(F)$ given by $\omega(X+\c_\u(F),Y+\c_\u(F)) = F([X,Y])$.
Hence, $\dim U \cdot F$ is even.
\end{proof}

As explained in the introduction for $p \ge h$, the
Kirillov orbit method gives a bijection $\cO \mapsto \chi_\cO$ between the coadjoint
orbits of $U(q)$ in $\u^*(q)$ and the irreducible complex characters
of $U(q)$.  Moreover, the degree of $\chi_\cO$ is equal to $\sqrt{|\cO|}$
so that the degree of $\chi_\cO$ is a power of $q$ by Proposition \ref{P:orbitsize}.  We define $k(U(q),q^d)$ to be
the number of irreducible complex characters of $U(q)$ of degree $q^d$.
As mentioned above $|\mathcal X_c(q)|$ is given by a polynomial
in $q$ for all $c$.  Therefore, from the definition of $\mathcal X_c$,
we deduce that $k(U(q),q^d)$ is given by a polynomial in $q$ when $G$ has rank less than or equal to 8 except for
type $E_8$.
We remark that it is interesting to know what the situation for $p < h$ is, as discussed in the introduction.
The polynomials giving $k(U(q),q^d)$ are in the appendix of this paper; the polynomials for $G$ of type $A_r$
are not included as they can be found in \cite{Is} and \cite{EV}, though we did verify that our program gave
the same polynomial for $r \le 10$.

As noted in
\cite{GOROconj} and
\cite{GMR} for the smaller rank instances,
the polynomials in Theorem \ref{thm:poly}
coincide for types $B_r$ and $C_r$.  We note however that already for
types $B_3$ and $C_3$ the polynomials $k(U(q),q^d)$ are
not equal for all $d$, as can be observed from the table in the appendix.

\begin{rem}
\label{rem:kup}
Let $P$ be a parabolic subgroup of $G$ with unipotent radical $U_P$ and
Lie algebra $\u_P$ of $U_P$.
Without loss, we may assume that $P$ contains $B$.
It is possible to define a suitable filtration
of $\u_P$  and $\u_P^*$ so that
we obtain separability
and connectivity results analogous to
Theorems \ref{thm:sepcoad} and \ref{thm:relconn}
for the actions of $U$ on
$\u_P$ and $\u_P^*$; and we also get these for the actions of $U_P$ on
$\u_P$ and $\u_P^*$.
Then a result analogous to Lemma \ref{lem:raminco} allows one to
define \emph{inert} and \emph{ramification} points for these actions.
In turn this leads to the notion of
\emph{minimal representatives} for each of these actions.

This allows one to generalize our algorithm
to obtain a parameterization of each of:
the $U$-orbits in $\u_P$; the $U$-orbits in $\u_P^*$; the $U_P$-orbits in $\u_P$;
and the $U_P$-orbits in $\u_P^*$.

Analogous to our previous results one gets that
the number $k(U(q),U_P(q))$ of $U(q)$-conjugacy classes in $U_P(q)$
coincides with $k(U(q),\u_P(q))= k(U(q),\u_P^*(q))$, the number of
$U(q)$-conjugacy classes in $\u_P(q)$. Likewise for the action of
$U_P(q)$, we obtain that $k(U_P(q)) = k(U_P(q),\u_P(q))= k(U_P(q),\u_P^*(q))$.

To calculate the number of $U_P(q)$-classes in $U_P(q)$,
first the algorithm for $k(U(q),\u_P(q))$ is used to
determine the minimal representatives of $U(q)$-orbits in $\u_P(q)$.
Then, for each minimal representative $X\in\u_P(q)$, the number
$k(U_P(q),U(q)\cdot X)$ of all $U_P(q)$-orbits in the $U(q)$-orbit of $X$
is calculated using the formula given in \cite[Lem.\ 5.2.1]{mosch:phd}.

We refer to \cite[Ch.\ 5]{mosch:phd} for further details of these parabolic
analogues and for a list of explicit results obtained using
the parabolic version of the algorithm.

Similar results regarding the irreducible characters of $U_P(q)$ can
also be obtained.
\end{rem}

\section{The modality of the action of $B$ on $\u$}
\label{subsect:modality}

In this section, we record another application of
the calculations to parameterize the $U$-orbits in $\u$ and $\u^*$.
Namely to determine the modality of the action $B$ in $\u$
in the cases we have considered in
Section \ref{sec:applic}.
This is  particularly meaningful since, at present, there appears
to be no effective method
to calculate a good upper bound for $\modd(B : \u)$.
Our results allow us to extend the
list of known values of $\modd(B : \u)$, which are given in \cite[Tables II and III]{jurgensroehrle}.
As a general reference for the modality of the action of an algebraic group
on an algebraic variety, we refer the reader to \cite{Vi} or \cite[Sec.\ 5.2]{VP}.

Recall that the modality of the action of the
Borel subgroup $B$ on $\u$ is defined by
\[
\modd(B : \u) := \max_{i \in \Z_{\ge 0}} ( \dim \u_i - i),
\]
where $\u_i := \{ X \in \u \mid \dim B \cdot X = i\}$.
Likewise, the modality of the action of $U$ on $\u$
is defined analogously by replacing the adjoint $B$-action on $\u$ by
the $U$-action, i.e.,
\[
\modd(U : \u) := \max_{i \in \Z_{\ge 0}} ( \dim \{ X \in \u \mid \dim U \cdot X = i\} - i).
\]

Each $U$-orbit in $\u$ admits a minimal representative, and
as explained in \cite[\S 2]{GOconj}, the minimal
representatives are partitioned into families $\mathcal{X}_c$,
where $c$ runs over $C = \{\I,\RR_0,\RR_\nn\}^N$.  Then $\modd(U:\u)$ is just the
largest dimension of a family of minimal representatives.
We also note that $\modd(U:\u) = \modd(U:\u^*)$, \cite[Thm.\ 1.4]{roehrle:modality2},
so we can determine this modality by considering the coadjoint action of $U$ on $\u^*$.

In the following theorem, we
show that $\modd(B : \u)$ can easily be determined from $\modd(U : \u)$; the parametrization by minimal representatives
is required for the proof.

\begin{thm}
\label{thm:modb}
Let $G$ be a simple algebraic group and suppose that $p$ is
either zero or a good prime for $G$.
Let $B$ be a Borel subgroup of $G$ with unipotent radical $U$.
Then
\[
\modd(U : \u)  - \rank G = \modd(B : \u).
\]
\end{thm}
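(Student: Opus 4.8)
The plan is to compare the two modalities orbit-family by orbit-family. Recall that the $U$-orbits in $\u$ are parametrized by the families $\mathcal X_c$ of minimal representatives, $c\in C=\{\I,\RR_0,\RR_\nn\}^N$, and that $\modd(U:\u)$ equals the maximal dimension of such a family. I would first observe that each $\mathcal X_c$ is $T$-stable: $T$ normalizes $U$, permutes the $U$-orbits, and preserves the weight-space filtration defining inert and ramification points, so $T$ acts on $\mathcal X_c$. Next I would show that the $B$-orbits meeting $\mathcal X_c$ are exactly the $T$-orbits on $\mathcal X_c$ (since $B=TU$ and the $U$-part has been quotiented out by passing to minimal representatives, each $B$-orbit meets $\mathcal X_c$ in a single $T$-orbit). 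Consequently, for $X\in\mathcal X_c$ we have $\dim B\cdot X = \dim U\cdot X + \dim T\cdot X$, and the $B$-orbits through $\mathcal X_c$ of a fixed dimension $j$ form the $T$-invariant locally closed subset $\{X\in\mathcal X_c : \dim T\cdot X = j - \dim(U\text{-orbit through }X)\}$.

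The key numerical input is then that $T$ acts on each $\mathcal X_c$ with generic stabilizer of a predictable dimension. More precisely, I would argue that on the open dense subset of $\mathcal X_c$ where all the ramification coordinates $a_j$ (those with $c_j=\RR_\nn$) are nonzero, the torus $T$ acts with finite stabilizer modulo the kernel of its action — equivalently, with orbits of dimension $\rank G = \dim T$ minus the dimension of the kernel $K_c$ of $T$ on $\mathcal X_c$. Since $\u$ is a faithful $B$-module and the $U$-orbits are genuinely $N$-dimensional–worth of coordinates, the generic $T$-orbit on the top-dimensional family has dimension exactly $\rank G$; on lower families one gets $T$-orbits of dimension $\rank G$ minus the rank of the sublattice of $X(T)$ spanned by the relevant root weights, but crucially the dimension of the family of minimal representatives already accounts for this. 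Putting the pieces together: if $\mathcal X_c$ is a family of minimal representatives of dimension $d_c$ on which $T$ acts generically with orbits of dimension $t_c$, then the generic $U$-orbit in the closure of $U\cdot\mathcal X_c$ contributes $d_c$ to $\modd(U:\u)$ via this family, while the corresponding contribution to $\modd(B:\u)$ is $d_c - t_c$; and I would show $t_c = \rank G$ for the family (or families) realizing the maximum, so that the two maxima differ by exactly $\rank G$.

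The main obstacle — and the step I would spend the most care on — is the inequality in the direction $\modd(B:\u)\ge\modd(U:\u)-\rank G$, i.e.\ verifying that a family $\mathcal X_c$ realizing $\modd(U:\u)$ has generic $T$-orbit dimension \emph{at most} $\rank G$ (which is automatic) \emph{and at least} $\rank G$, rather than something smaller. Equivalently one must rule out the torus acting with a positive-dimensional kernel on the top family; this is where the faithfulness of the $B$-action on $\u$ and the fact that minimal representatives in the top family involve root-vector coordinates $e_{\gamma_j}$ spanning weights of full rank in $X(T)\otimes\R$ is used. A clean way to finish is to note that after the $T$-action, the quotient $\mathcal X_c/\!/T$ has dimension $d_c - \rank G$ for this family, and to invoke $\modd(B:\u)=\modd(B:\u^*)$ together with $\modd(U:\u)=\modd(U:\u^*)$ from \cite[Thm.\ 1.4]{roehrle:modality2} to reduce to the situation already understood in Section \ref{sec:applic}, where the parametrization is explicit. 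The reverse inequality $\modd(B:\u)\le\modd(U:\u)-\rank G$ follows more directly, since every $B$-orbit is a union of $U$-orbits and $\dim B\cdot X\le\dim U\cdot X+\rank G$, so sweeping any $B$-stable constructible set of $B$-orbits by the corresponding $U$-orbits can only increase the fibre dimension by at most $\rank G$.
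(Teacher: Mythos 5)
Your setup is the same as the paper's: the identity $C_B(X)=C_TX)C_U(X)$ for minimal representatives (this is \cite[Prop.\ 7.7]{GOconj}) gives $\dim B\cdot X=\dim U\cdot X+\dim T\cdot X$, so each family $\mathcal X_c$ contributes $\dim\mathcal X_c-\rank G+\dim C_T(X)$ to $\modd(B:\u)$, and the whole theorem reduces to showing that both modalities are attained on families with $\dim C_T(X)=0$. You correctly identify this as the crux, but you do not actually prove it. Your proposed justification --- that the minimal representatives in a top-dimensional family ``involve root-vector coordinates $e_{\gamma_j}$ spanning weights of full rank in $X(T)\otimes\R$'' because $\u$ is a faithful $B$-module --- is an assertion of exactly the fact that needs proof. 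Faithfulness of $B$ on $\u$ says nothing about the support of any particular extremal family; a priori the family of maximal dimension could consist of elements supported on roots spanning a proper sublattice, giving a positive-dimensional torus centralizer and breaking the count. Note also that your ``easy'' direction $\modd(B:\u)\le\modd(U:\u)-\rank G$ has the same gap: from $\dim B\cdot X=\dim U\cdot X+\dim T\cdot X$ one only gets $\dim\u_i^B-i\le\modd(U:\u)-\dim T\cdot X$ on the relevant stratum, which is weaker than needed whenever $\dim T\cdot X<\rank G$. Finally, the fallback of invoking $\modd(B:\u)=\modd(B:\u^*)$ and the explicit parametrizations of Section \ref{sec:applic} cannot close the gap, since those computations only cover rank at most $8$ while the theorem is stated for all simple $G$.

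The paper fills this hole with an induction on $\rank G$. Writing $\u_{\dist}$ for the union of $U$-orbits whose minimal representative has finite torus centralizer and $\u_{\dec}$ for its complement, any $X\in\u_{\dec}$ is centralized by a positive-dimensional subtorus $S\le T$, hence lies in $\l\cap\u$ for the proper Levi $L=C_G(S)$; since there are finitely many such $L$ up to $B$-conjugacy, $\modd(U:\u_{\dec})$ and $\modd(B:\u_{\dec})$ are bounded by the corresponding modalities for the Levis. The inductive hypothesis produces a distinguished extremal family $\mathcal X_c(\l)$ in $\l\cap\u$, and the explicit construction $\mathcal Y_c=\{\sum a_ie_{\alpha_i}+X\mid a_i\in k^\times,\ X\in\mathcal X_c(\l)\}$ (adding generic coefficients on the simple roots of $G$ missing from $L$) lifts it to a family inside $\u_{\dist}$ of dimension $\dim\mathcal X_c(\l)+h$ with full-dimensional $T$-orbits, which yields $\modd(U:\u_{\dec})\le\modd(U:\u_{\dist})$ and likewise for $B$. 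Some argument of this kind --- producing, from any family with positive-dimensional torus centralizer, a distinguished family doing at least as well --- is the essential missing ingredient in your proposal.
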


\begin{proof}
By \cite[Prop.\ 7.7]{GOconj},
we see that for any minimal representative $X \in \u$
we have
\begin{equation} \label{e:centprod}
C_B(X) = C_T(X) C_U(X)
\end{equation}
and we note that the definition of the families $\mathcal{X}_c$ ensures that
$\dim T \cdot X$ is independent of the choice of $X \in \mathcal{X}_c$.
Each family $\mathcal{X}_c$ of $U$-orbits in $\u$ gives rise to
a family of $B$-orbits of dimension
\begin{equation}
\label{eq:Xc}
\dim \mathcal{X}_c - \dim T \cdot X =
\dim \mathcal{X}_c - \rank G + \dim C_T(X),
\end{equation}
where $X \in \mathcal X_c$.

Let $\u_{\dist}$ be the union of all $U$-orbits in $\u$
such that $\dim C_T(X) = 0$ for $X$ the minimal representative of the orbit;
we note that this is equivalent to
$C_T(X)^\circ = 1$.
Note that thanks to \eqref{e:centprod},  we have that $\u_{\dist}$
is the $B$-stable subvariety of $\u$ consisting of all
$X \in \u$ such that $C_B(X)^\circ$ is unipotent.
Then by \eqref{eq:Xc}, we have
$
\modd(B:\u_{\dist}) = \modd(U:\u_{\dist}) - \rank G.
$

To complete the proof, we show by induction on the rank of $G$ that
\begin{equation} \label{e:modal1}
\modd(B:\u) = \modd(B:\u_{\dist})
\end{equation}
and
\begin{equation} \label{e:modal2}
\modd(U:\u) = \modd(U:\u_{\dist}).
\end{equation}
The base case where $G$ has rank 0 is trivial, so we assume inductively that
\eqref{e:modal1} and \eqref{e:modal2} are true in lower rank.

Let $\u_{\dec} := \u \setminus \u_{\dist}$ be the complement of $\u_{\dist}$ in $\u$.
We show that
$$\modd(U:\u_{\dec}) \le \modd(U:\u_{\dist}) \quad \text{and} \quad
\modd(B:\u_{\dec}) \le \modd(B:\u_{\dist}),
$$
which gives the inductive step.

Let $X \in \u_{\dec}$.  Then
there is a torus $S \subseteq T$ of positive dimension
contained in $C_B(X)$.  We have that $L=C_G(S)$ is a proper Levi subgroup of $G$, and
moreover, $L \cap B$ is a Borel subgroup of $L$
and $\l \cap \u$ is the Lie algebra of the unipotent radical of $L \cap B$.
Then the intersection of a $B$-orbit in $\u$ with $\l \cap \u$ is a union of $(L \cap B)$-orbits; and also the intersection
of a $U$-orbit in $\u$ with $\l \cap \u$ is a union of $(L \cap U)$-orbits.
Also, up to conjugacy by $B$,
there are only finitely many possibilities for $L$,
(each such is determined by
its root system which is a subsystem of the root system of $G$).

Therefore, we have that
\[
\modd(B:\u_\dec) \le \max_L \modd(L \cap B : \l \cap \u ),
\]
where the maximum is taken over all Levi subgroups $L$ of $G$ containing $T$.
Similarly,
\[
\modd(U:\u_\dec) \le \max_L \modd(L \cap U : \l \cap \u ).
\]

We proceed to show that
$$
\modd(L \cap B : \l \cap \u ) \le \modd(B:\u_{\dist})
$$
and that
$$
\modd(L \cap U : \l \cap \u) \le \modd(U:\u_{\dist}),
$$
when $L$ is a Levi subgroup of $G$ containing $T$.
For this we may assume that the simple roots of $L$ determined by
$L \cap B$ are a subset of the simple roots of $G$ determined by $B$.

By the inductive hypothesis, there is a family $\mathcal{X}_c(\l)$ of minimal
representatives of $(L \cap U)$-orbits in $\l \cap \u$ such that:
\begin{enumerate}
\item[(i)] $C_T(X)^\circ = Z(L)^\circ$ for all $X \in \mathcal{X}_c(\l)$;
\item[(ii)] $\dim \mathcal{X}_c(\l) = \modd(L \cap U: \l \cap \u)$; and
\item[(iii)] $\dim \mathcal{X}_c(\l) - \rank G  + \dim Z(L) = \modd(L \cap B: \l \cap \u)$.
\end{enumerate}
Let $\alpha_1,\dots,\alpha_h$ be the simple roots of $G$ that are not simple roots of $L$.
We consider
$$
\mathcal Y_c = \left\{\sum_{i=1}^h a_i e_{\alpha_i} + X \:\: \vline \:\:  a_i \in k^\times \text{ and } X \in \mathcal{X}_c(\l)\right\}.
$$
Clearly, the elements of $\mathcal Y_c$ lie in distinct $U$-orbits in $\u$.
Moreover,
$\mathcal Y_c \subseteq \u_\dist$, and $\dim \mathcal Y_c = \dim X_c(\l) + h$, so we deduce
that $\modd(L \cap U : \l \cap \u) \le \modd(U:\u_{\dist})$.

We have that $T$ acts on $\mathcal Y_c$ and the
dimension of each $T$-orbit is $\dim T$.  Therefore,
$\mathcal Y_c$ gives a family of $B$-orbits in $\u_\dist$ of dimension $\dim \mathcal{X}_c(\l) - \rank G  + \dim Z(L)$.
From this it follows that $\modd(L \cap B: \l \cap \u) \le \modd(B:\u_{\dist})$.
This completes the proof.
\end{proof}

In the cases in which we have calculated $k(U(q))$, the modality
$\modd(U:\u)$ is simply the degree of the polynomial giving $k(U(q))$.
Then we can calculate $\modd(B:\u)$ using Theorem \ref{thm:modb}.
It is also possible to determine $\modd(B:\u^*)$ directly from our parametrization of the $U$-orbits
in $\u^*$ and use that $\modd(B:\u) = \modd(B:\u^*)$, by \cite[Thm.\ 1.4]{roehrle:modality2}.

This allows us to
extend the list of values for $\modd(B : \u)$ as follows.
The list of previously known values for $\modd(B : \u)$
for smaller rank groups is given in
\cite[Tables II and III]{jurgensroehrle}.
Below for completeness we also include the values of this modality
for $G$ of type $A_n$, which we obtain via the polynomials calculated in
\cite[\S 5]{VLAR}.

\begin{table}[ht!b]
\renewcommand{\arraystretch}{1.5}
\begin{tabular}{r|cccccccc}\hline
Type of $G$ &  $A_{11}$ & $A_{12}$ & $B_7$ & $B_8$ & $C_8$ & $D_8$ & $E_7$ \\
\hline
$\modd(B : \u)$ & 7 & 8 & 7 & 9 & 9 & 8 & 10  \\
\hline
\end{tabular}
\caption{Modality of the action of $B$ on $\u$}
\label{modality-table}
\end{table}

Note that the values given in
Table \ref{modality-table} were previously only known to be
lower bounds for $\modd(B : \u)$ and were obtained by means of a rather
coarse dimension estimate; see \cite[Prop.\ 3.3, Table 4]{roehrle:modality}.
The construction of the lower bounds
for $\modd(B : \u)$
is given in the following way.
There exists a normal subgroup $A$ of $B$ contained in $U$ with Lie algebra
$\a$, such that
$\dim \a/[\a,\a] - \dim B/A$ is quite large, and this is the lower bound given.
To explain this further, we note that the adjoint action of
$B$ on $\u$ restricts to an action on $\a$, which in turn
induces an action of $B/A$ on $\a/[\a,\a]$.  Then clearly,
we have that $\modd(B:\u) \ge \dim \a/[\a,\a] - \dim B/A$.
Our results show that in fact in the cases where the modality is
known, it is actually obtained in this way.

\appendix

\section{The polynomials $k(U(q),q^d)$}

In this appendix we include the polynomials
$k(U(q),q^d)$, which give the number of irreducible characters of $U(q)$
of degree $q^d$.  These polynomials are given as polynomials in
$v = q-1$ and are in the table below.  We note that all the coefficients
are positive, which verifies that the analogue of Isaacs' conjecture holds in
these cases.

We do not list the polynomials for $G$
of type $A$ as these are included in \cite{Is} and \cite{EV}; we have
verified that our program gives the same polynomials for $A_r$ for $r \le 10$.

We remark that $k(U(q),q^0)$ is always equal to $q^r$, where $r$ is the rank of $G$, but
we include these polynomials for completeness.


\begin{center} \tiny
\renewcommand{\arraystretch}{1}
\begin{longtable}{|l|l|p{410pt}|}
\hline
$G(q)$ & $d$ & $k(U(q),q^d)$ \\
\hline
\endhead
\hline
\endfoot
\hline
\caption{$k(U(q),q^d)$ as polynomial in $v=q-1$}
\endlastfoot
$B_2$ & 0 & $v^2+2v+1$ \\
 & 1 & $v^2+2v$ \\
\hline
$G_2$ & 0 & $v^2+2v+1$ \\
 & 1 & $v^3+3v^2+3v$ \\
 & 2 & $v^2+v$ \\
\hline
$B_3$ & 0 & $v^3+3v^2+3v+1$ \\
 & 1 & $2v^3+5v^2+3v$ \\
 & 2 & $v^4+4v^3+6v^2+2v$ \\
 & 3 & $v^3+2v^2+v$ \\
\hline
$C_3$ & 0 & $v^3+3v^2+3v+1$ \\
 & 1 & $v^4+4v^3+6v^2+3v$ \\
 & 2 & $2v^3+4v^2+2v$ \\
 & 3 & $v^3+3v^2+v$ \\
\hline
$B_4$ & 0 & $v^4+4v^3+6v^2+4v+1$ \\
 & 1 & $3v^4+10v^3+11v^2+4v$ \\
 & 2 & $v^6+6v^5+16v^4+23v^3+15v^2+3v$ \\
 & 3 & $2v^5+10v^4+18v^3+13v^2+3v$ \\
 & 4 & $2v^5+11v^4+19v^3+11v^2+v$ \\
 & 5 & $v^5+6v^4+11v^3+6v^2+v$ \\
 & 6 & $v^4+3v^3+2v^2$ \\
\hline
$C_4$ & 0 & $v^4+4v^3+6v^2+4v+1$ \\
 & 1 & $v^5+6v^4+13v^3+12v^2+4v$ \\
 & 2 & $2v^5+11v^4+20v^3+14v^2+3v$ \\
 & 3 & $v^6+6v^5+17v^4+24v^3+14v^2+3v$ \\
 & 4 & $2v^5+9v^4+15v^3+9v^2+v$ \\
 & 5 & $3v^4+8v^3+6v^2+v$ \\
 & 6 & $v^4+4v^3+3v^2$ \\
\hline
$D_4$ & 0 & $v^4+4v^3+6v^2+4v+1$ \\
 & 1 & $v^5+5v^4+10v^3+9v^2+3v$ \\
 & 2 & $3v^4+9v^3+9v^2+3v$ \\
 & 3 & $v^5+5v^4+10v^3+7v^2+v$ \\
 & 4 & $v^4+3v^3+3v^2+v$ \\
\hline
$F_4$ & 0 & $v^4+4v^3+6v^2+4v+1$ \\
 & 1 & $v^5+6v^4+13v^3+12v^2+4v$ \\
 & 2 & $v^6+7v^5+20v^4+28v^3+18v^2+4v$ \\
 & 3 & $4v^5+20v^4+33v^3+21v^2+4v$ \\
 & 4 & $v^8+8v^7+28v^6+58v^5+79v^4+66v^3+24v^2+2v$ \\
 & 5 & $v^7+7v^6+22v^5+39v^4+37v^3+15v^2+2v$ \\
 & 6 & $2v^6+14v^5+36v^4+40v^3+17v^2+2v$ \\
 & 7 & $2v^6+13v^5+32v^4+34v^3+13v^2+2v$ \\
 & 8 & $4v^5+15v^4+19v^3+8v^2$ \\
 & 9 & $v^5+7v^4+11v^3+5v^2$ \\
 & 10 & $v^4+3v^3+v^2$ \\
\hline
$B_5$ & 0 & $v^5+5v^4+10v^3+10v^2+5v+1$ \\
 & 1 & $4v^5+17v^4+27v^3+19v^2+5v$ \\
 & 2 & $v^7+8v^6+29v^5+56v^4+56v^3+26v^2+4v$ \\
 & 3 & $2v^7+15v^6+49v^5+83v^4+72v^3+29v^2+4v$ \\
 & 4 & $v^8+9v^7+37v^6+87v^5+119v^4+87v^3+29v^2+3v$ \\
 & 5 & $3v^7+23v^6+70v^5+105v^4+77v^3+24v^2+2v$ \\
 & 6 & $9v^6+48v^5+90v^4+71v^3+21v^2+v$ \\
 & 7 & $v^8+8v^7+31v^6+73v^5+96v^4+57v^3+11v^2+v$ \\
 & 8 & $v^7+7v^6+23v^5+37v^4+26v^3+7v^2$ \\
 & 9 & $2v^6+10v^5+19v^4+14v^3+3v^2$ \\
 & 10 & $v^5+3v^4+3v^3+v^2$ \\
\hline
$C_5$ & 0 & $v^5+5v^4+10v^3+10v^2+5v+1$ \\
 & 1 & $v^6+8v^5+23v^4+31v^3+20v^2+5v$ \\
 & 2 & $v^7+8v^6+29v^5+55v^4+54v^3+25v^2+4v$ \\
 & 3 & $v^7+12v^6+48v^5+88v^4+78v^3+31v^2+4v$ \\
 & 4 & $v^8+9v^7+38v^6+91v^5+123v^4+86v^3+27v^2+3v$ \\
 & 5 & $3v^7+24v^6+74v^5+110v^4+80v^3+25v^2+2v$ \\
 & 6 & $v^8+8v^7+32v^6+78v^5+105v^4+69v^3+18v^2+v$ \\
 & 7 & $2v^7+14v^6+44v^5+68v^4+46v^3+12v^2+v$ \\
 & 8 & $3v^6+17v^5+34v^4+26v^3+7v^2$ \\
 & 9 & $4v^5+14v^4+14v^3+4v^2$ \\
 & 10 & $v^5+5v^4+6v^3+v^2$ \\
\hline
$D_5$ & 0 & $v^5+5v^4+10v^3+10v^2+5v+1$ \\
 & 1 & $v^6+7v^5+19v^4+25v^3+16v^2+4v$ \\
 & 2 & $2v^6+14v^5+35v^4+40v^3+21v^2+4v$ \\
 & 3 & $v^7+8v^6+29v^5+54v^4+50v^3+21v^2+3v$ \\
 & 4 & $2v^6+17v^5+42v^4+42v^3+17v^2+2v$ \\
 & 5 & $v^7+8v^6+29v^5+53v^4+43v^3+14v^2+v$ \\
 & 6 & $v^6+7v^5+18v^4+18v^3+7v^2+v$ \\
 & 7 & $2v^5+8v^4+10v^3+3v^2$ \\
 & 8 & $v^4+2v^3+v^2$ \\
\hline
$B_6$ & 0 & $v^6+6v^5+15v^4+20v^3+15v^2+6v+1$ \\
 & 1 & $5v^6+26v^5+54v^4+56v^3+29v^2+6v$ \\
 & 2 & $v^8+10v^7+46v^6+113v^5+153v^4+112v^3+40v^2+5v$ \\
 & 3 & $v^9+10v^8+47v^7+135v^6+243v^5+262v^4+158v^3+47v^2+5v$ \\
 & 4 & $v^9+15v^8+84v^7+248v^6+421v^5+410v^4+216v^3+53v^2+4v$ \\
 & 5 & $v^{10}+11v^9+61v^8+209v^7+457v^6+628v^5+516v^4+234v^3+51v^2+4v$ \\
 & 6 & $4v^9+43v^8+198v^7+495v^6+713v^5+585v^4+254v^3+48v^2+2v$ \\
 & 7 & $v^{11}+11v^{10}+56v^9+183v^8+438v^7+765v^6+896v^5+632v^4+237v^3+39v^2+2v$ \\
 & 8 & $2v^{10}+20v^9+95v^8+290v^7+589v^6+751v^5+554v^4+213v^3+35v^2+v$ \\
 & 9 & $4v^9+39v^8+178v^7+450v^6+634v^5+477v^4+171v^3+22v^2+v$ \\
 & 10 & $v^{10}+12v^9+67v^8+224v^7+468v^6+582v^5+392v^4+124v^3+14v^2$ \\
 & 11 & $2v^9+21v^8+100v^7+259v^6+365v^5+264v^4+86v^3+9v^2$ \\
 & 12 & $v^9+11v^8+54v^7+144v^6+205v^5+142v^4+43v^3+5v^2$ \\
 & 13 & $v^8+11v^7+45v^6+84v^5+71v^4+25v^3+2v^2$ \\
 & 14 & $2v^7+13v^6+30v^5+28v^4+9v^3+v^2$ \\
 & 15 & $v^6+4v^5+5v^4+2v^3$ \\
\hline
$C_6$ & 0 & $v^6+6v^5+15v^4+20v^3+15v^2+6v+1$ \\
 & 1 & $v^7+10v^6+36v^5+64v^4+61v^3+30v^2+6v$ \\
 & 2 & $v^8+11v^7+50v^6+118v^5+154v^4+110v^3+39v^2+5v$ \\
 & 3 & $4v^8+35v^7+130v^6+256v^5+281v^4+168v^3+49v^2+5v$ \\
 & 4 & $v^{10}+10v^9+47v^8+146v^7+321v^6+472v^5+426v^4+215v^3+52v^2+4v$ \\
 & 5 & $3v^9+31v^8+148v^7+392v^6+599v^5+520v^4+241v^3+52v^2+4v$ \\
 & 6 & $5v^9+51v^8+230v^7+565v^6+790v^5+620v^4+255v^3+46v^2+2v$ \\
 & 7 & $v^{11}+11v^{10}+56v^9+186v^8+455v^7+803v^6+932v^5+644v^4+238v^3+40v^2+2v$ \\
 & 8 & $2v^{10}+21v^9+104v^8+318v^7+628v^6+769v^5+542v^4+199v^3+31v^2+v$ \\
 & 9 & $5v^9+49v^8+208v^7+483v^6+638v^5+460v^4+163v^3+23v^2+v$ \\
 & 10 & $v^{10}+10v^9+52v^8+178v^7+391v^6+510v^5+361v^4+122v^3+15v^2$ \\
 & 11 & $2v^9+19v^8+85v^7+220v^6+318v^5+235v^4+78v^3+9v^2$ \\
 & 12 & $3v^8+26v^7+93v^6+162v^5+133v^4+47v^3+6v^2$ \\
 & 13 & $4v^7+28v^6+67v^5+66v^4+26v^3+2v^2$ \\
 & 14 & $5v^6+22v^5+29v^4+13v^3+v^2$ \\
 & 15 & $v^6+6v^5+10v^4+4v^3$ \\
\hline
$D_6$ & 0 & $v^6+6v^5+15v^4+20v^3+15v^2+6v+1$ \\
 & 1 & $v^7+9v^6+31v^5+54v^4+51v^3+25v^2+5v$ \\
 & 2 & $v^8+9v^7+38v^6+89v^5+119v^4+89v^3+34v^2+5v$ \\
 & 3 & $v^8+15v^7+72v^6+165v^5+201v^4+130v^3+40v^2+4v$ \\
 & 4 & $3v^8+31v^7+124v^6+246v^5+260v^4+145v^3+39v^2+4v$ \\
 & 5 & $v^{10}+10v^9+46v^8+135v^7+280v^6+393v^5+339v^4+163v^3+36v^2+2v$ \\
 & 6 & $2v^9+18v^8+77v^7+200v^6+317v^5+288v^4+138v^3+30v^2+2v$ \\
 & 7 & $5v^8+43v^7+154v^6+282v^5+270v^4+128v^3+25v^2+v$ \\
 & 8 & $3v^8+31v^7+122v^6+227v^5+208v^4+89v^3+15v^2+v$ \\
 & 9 & $v^9+9v^8+41v^7+113v^6+181v^5+152v^4+61v^3+8v^2$ \\
 & 10 & $v^8+8v^7+31v^6+62v^5+61v^4+27v^3+5v^2$ \\
 & 11 & $2v^7+12v^6+29v^5+32v^4+15v^3+2v^2$ \\
 & 12 & $v^6+4v^5+6v^4+4v^3+v^2$ \\
\hline
$E_6$ & 0 & $v^6+6v^5+15v^4+20v^3+15v^2+6v+1$ \\
 & 1 & $v^7+9v^6+31v^5+54v^4+51v^3+25v^2+5v$ \\
 & 2 & $5v^7+34v^6+93v^5+130v^4+97v^3+36v^2+5v$ \\
 & 3 & $v^9+9v^8+42v^7+123v^6+223v^5+240v^4+145v^3+44v^2+5v$ \\
 & 4 & $5v^8+42v^7+155v^6+300v^5+316v^4+176v^3+46v^2+4v$ \\
 & 5 & $2v^9+23v^8+118v^7+327v^6+518v^5+462v^4+219v^3+48v^2+3v$ \\
 & 6 & $14v^8+113v^7+367v^6+602v^5+523v^4+231v^3+45v^2+3v$ \\
 & 7 & $v^{11}+11v^{10}+57v^9+186v^8+433v^7+730v^6+826v^5+560v^4+204v^3+36v^2+2v$ \\
 & 8 & $v^{10}+10v^9+51v^8+173v^7+396v^6+558v^5+444v^4+183v^3+31v^2+v$ \\
 & 9 & $3v^9+30v^8+144v^7+385v^6+575v^5+455v^4+177v^3+28v^2+v$ \\
 & 10 & $12v^8+95v^7+304v^6+480v^5+375v^4+131v^3+16v^2+v$ \\
 & 11 & $2v^9+21v^8+97v^7+243v^6+334v^5+233v^4+71v^3+10v^2$ \\
 & 12 & $2v^8+20v^7+76v^6+139v^5+124v^4+49v^3+6v^2$ \\
 & 13 & $3v^7+24v^6+63v^5+68v^4+28v^3+3v^2$ \\
 & 14 & $4v^6+19v^5+27v^4+12v^3+v^2$ \\
 & 15 & $3v^5+8v^4+5v^3$ \\
 & 16 & $v^4+v^3$ \\
\hline
$B_7$ & 0 & $v^7+7v^6+21v^5+35v^4+35v^3+21v^2+7v+1$ \\
 & 1 & $6v^7+37v^6+95v^5+130v^4+100v^3+41v^2+7v$ \\
 & 2 & $v^9+12v^8+67v^7+201v^6+345v^5+346v^4+197v^3+57v^2+6v$ \\
 & 3 & $v^{10}+13v^9+76v^8+262v^7+564v^6+757v^5+618v^4+290v^3+69v^2+6v$ \\
 & 4 & $4v^{10}+44v^9+222v^8+660v^7+1228v^6+1431v^5+1013v^4+406v^3+79v^2+5v$ \\
 & 5 & $v^{12}+12v^{11}+69v^{10}+264v^9+747v^8+1565v^7+2316v^6+2280v^5+1400v^4+492v^3+85v^2+5v$ \\
 & 6 & $3v^{11}+39v^{10}+244v^9+910v^8+2147v^7+3246v^6+3095v^5+1783v^4+570v^3+85v^2+4v$ \\
 & 7 & $v^{12}+17v^{11}+131v^{10}+610v^9+1859v^8+3762v^7+4995v^6+4218v^5+2145v^4+598v^3+77v^2+3v$ \\
 & 8 & $v^{13}+15v^{12}+103v^{11}+445v^{10}+1382v^9+3205v^8+5420v^7+6366v^6+4909v^5+2326v^4+615v^3+74v^2+2v$ \\
 & 9 & $3v^{12}+41v^{11}+266v^{10}+1081v^9+2961v^8+5474v^7+6667v^6+5145v^5+2363v^4+578v^3+61v^2+2v$ \\
 & 10 & $v^{14}+14v^{13}+92v^{12}+383v^{11}+1154v^{10}+2717v^9+5129v^8+7494v^7+7871v^6+5474v^5+2311v^4+529v^3+54v^2+v$ \\
 & 11 & $2v^{13}+27v^{12}+172v^{11}+692v^{10}+1983v^9+4235v^8+6635v^7+7220v^6+5088v^5+2131v^4+460v^3+37v^2+v$ \\
 & 12 & $5v^{12}+62v^{11}+361v^{10}+1310v^9+3245v^8+5530v^7+6254v^6+4422v^5+1793v^4+360v^3+27v^2$ \\
 & 13 & $9v^{11}+111v^{10}+618v^9+2007v^8+4033v^7+4997v^6+3678v^5+1491v^4+286v^3+18v^2$ \\
 & 14 & $5v^{11}+74v^{10}+464v^9+1618v^8+3362v^7+4166v^6+2965v^5+1118v^4+193v^3+13v^2$ \\
 & 15 & $v^{13}+13v^{12}+78v^{11}+299v^{10}+847v^9+1862v^8+3039v^7+3343v^6+2238v^5+821v^4+142v^3+7v^2$ \\
 & 16 & $v^{12}+12v^{11}+69v^{10}+264v^9+736v^8+1439v^7+1800v^6+1307v^5+492v^4+79v^3+4v^2$ \\
 & 17 & $2v^{11}+22v^{10}+112v^9+355v^8+745v^7+983v^6+742v^5+289v^4+50v^3+2v^2$ \\
 & 18 & $3v^{10}+28v^9+122v^8+311v^7+469v^6+389v^5+158v^4+24v^3+v^2$ \\
 & 19 & $3v^9+24v^8+84v^7+153v^6+144v^5+64v^4+11v^3$ \\
 & 20 & $3v^8+18v^7+43v^6+48v^5+24v^4+4v^3$ \\
 & 21 & $v^7+4v^6+6v^5+4v^4+v^3$ \\
\hline
$C_7$ & 0 & $v^7+7v^6+21v^5+35v^4+35v^3+21v^2+7v+1$ \\
 & 1 & $v^8+12v^7+52v^6+115v^5+145v^4+106v^3+42v^2+7v$ \\
 & 2 & $v^9+14v^8+77v^7+220v^6+361v^5+350v^4+195v^3+56v^2+6v$ \\
 & 3 & $v^{10}+13v^9+79v^8+280v^7+606v^6+807v^5+651v^4+302v^3+71v^2+6v$ \\
 & 4 & $v^{11}+11v^{10}+67v^9+275v^8+756v^7+1351v^6+1526v^5+1049v^4+409v^3+78v^2+5v$ \\
 & 5 & $2v^{11}+27v^{10}+167v^9+623v^8+1501v^7+2350v^6+2345v^5+1435v^4+502v^3+87v^2+5v$ \\
 & 6 & $v^{12}+14v^{11}+95v^{10}+412v^9+1243v^8+2605v^7+3672v^6+3341v^5+1858v^4+576v^3+83v^2+4v$ \\
 & 7 & $v^{13}+13v^{12}+82v^{11}+337v^{10}+1021v^9+2389v^8+4209v^7+5237v^6+4295v^5+2159v^4+603v^3+79v^2+3v$ \\
 & 8 & $3v^{12}+36v^{11}+220v^{10}+895v^9+2539v^8+4908v^7+6231v^6+4985v^5+2374v^4+611v^3+70v^2+2v$ \\
 & 9 & $v^{13}+15v^{12}+109v^{11}+502v^{10}+1632v^9+3859v^8+6488v^7+7401v^6+5425v^5+2393v^4+576v^3+63v^2+2v$ \\
 & 10 & $4v^{12}+55v^{11}+358v^{10}+1429v^9+3764v^8+6605v^7+7570v^6+5446v^5+2306v^4+515v^3+49v^2+v$ \\
 & 11 & $v^{14}+14v^{13}+91v^{12}+372v^{11}+1104v^{10}+2602v^9+4983v^8+7346v^7+7643v^6+5157v^5+2067v^4+435v^3+38v^2+v$ \\
 & 12 & $2v^{13}+26v^{12}+157v^{11}+606v^{10}+1717v^9+3715v^8+5899v^7+6388v^6+4375v^5+1744v^4+357v^3+28v^2$ \\
 & 13 & $4v^{12}+49v^{11}+282v^{10}+1021v^9+2551v^8+4395v^7+4985v^6+3491v^5+1388v^4+272v^3+19v^2$ \\
 & 14 & $8v^{11}+93v^{10}+491v^9+1533v^8+3015v^7+3695v^6+2692v^5+1075v^4+201v^3+13v^2$ \\
 & 15 & $v^{12}+12v^{11}+77v^{10}+338v^9+1030v^8+2084v^7+2639v^6+1954v^5+771v^4+137v^3+8v^2$ \\
 & 16 & $2v^{11}+24v^{10}+140v^9+506v^8+1155v^7+1586v^6+1223v^5+485v^4+85v^3+4v^2$ \\
 & 17 & $3v^{10}+35v^9+177v^8+498v^7+792v^6+675v^5+287v^4+52v^3+2v^2$ \\
 & 18 & $4v^9+42v^8+172v^7+346v^6+346v^5+161v^4+30v^3+v^2$ \\
 & 19 & $5v^8+42v^7+120v^6+148v^5+78v^4+14v^3$ \\
 & 20 & $6v^7+32v^6+54v^5+34v^4+6v^3$ \\
 & 21 & $v^7+7v^6+15v^5+10v^4+v^3$ \\
\hline
$D_7$ & 0 & $v^7+7v^6+21v^5+35v^4+35v^3+21v^2+7v+1$ \\
 & 1 & $v^8+11v^7+46v^6+100v^5+125v^4+91v^3+36v^2+6v$ \\
 & 2 & $v^9+12v^8+62v^7+174v^6+287v^5+284v^4+164v^3+50v^2+6v$ \\
 & 3 & $4v^9+41v^8+180v^7+432v^6+611v^5+515v^4+248v^3+60v^2+5v$ \\
 & 4 & $2v^{10}+22v^9+123v^8+409v^7+828v^6+1023v^5+758v^4+319v^3+67v^2+5v$ \\
 & 5 & $v^{11}+11v^{10}+66v^9+281v^8+801v^7+1439v^6+1585v^5+1039v^4+380v^3+67v^2+4v$ \\
 & 6 & $v^{12}+13v^{11}+78v^{10}+296v^9+808v^8+1621v^7+2280v^6+2110v^5+1208v^4+394v^3+62v^2+3v$ \\
 & 7 & $3v^{11}+36v^{10}+203v^9+715v^8+1660v^7+2501v^6+2366v^5+1339v^4+415v^3+58v^2+2v$ \\
 & 8 & $v^{12}+13v^{11}+85v^{10}+358v^9+1050v^8+2146v^7+2933v^6+2541v^5+1313v^4+370v^3+48v^2+2v$ \\
 & 9 & $v^{12}+14v^{11}+91v^{10}+373v^9+1066v^8+2140v^7+2890v^6+2476v^5+1261v^4+345v^3+41v^2+v$ \\
 & 10 & $2v^{11}+28v^{10}+181v^9+694v^8+1653v^7+2430v^6+2136v^5+1063v^4+269v^3+27v^2+v$ \\
 & 11 & $4v^{10}+65v^9+391v^8+1177v^7+1940v^6+1779v^5+876v^4+208v^3+18v^2$ \\
 & 12 & $v^{12}+12v^{11}+68v^{10}+251v^9+686v^8+1367v^7+1805v^6+1428v^5+620v^4+136v^3+12v^2$ \\
 & 13 & $v^{11}+12v^{10}+69v^9+257v^8+639v^7+990v^6+891v^5+438v^4+100v^3+7v^2$ \\
 & 14 & $2v^{10}+21v^9+101v^8+290v^7+496v^6+469v^5+226v^4+49v^3+4v^2$ \\
 & 15 & $3v^9+27v^8+107v^7+226v^6+249v^5+132v^4+30v^3+2v^2$ \\
 & 16 & $3v^8+22v^7+64v^6+86v^5+51v^4+12v^3+v^2$ \\
 & 17 & $3v^7+15v^6+27v^5+19v^4+4v^3$ \\
 & 18 & $v^6+3v^5+3v^4+v^3$ \\
\hline
$E_7$ & 0 & $v^7+7v^6+21v^5+35v^4+35v^3+21v^2+7v+1$ \\
 & 1 & $v^8+11v^7+46v^6+100v^5+125v^4+91v^3+36v^2+6v$ \\
 & 2 & $v^9+13v^8+69v^7+194v^6+317v^5+309v^4+175v^3+52v^2+6v$ \\
 & 3 & $v^{10}+12v^9+73v^8+260v^7+562v^6+747v^5+604v^4+283v^3+68v^2+6v$ \\
 & 4 & $3v^{10}+35v^9+191v^8+601v^7+1149v^6+1350v^5+959v^4+390v^3+80v^2+6v$ \\
 & 5 & $v^{12}+12v^{11}+69v^{10}+267v^9+777v^8+1669v^7+2491v^6+2442v^5+1489v^4+522v^3+90v^2+5v$ \\
 & 6 & $3v^{11}+45v^{10}+301v^9+1145v^8+2676v^7+3936v^6+3623v^5+2018v^4+631v^3+95v^2+5v$ \\
 & 7 & $v^{14}+14v^{13}+91v^{12}+368v^{11}+1058v^{10}+2367v^9+4336v^8+6398v^7+7081v^6+5404v^5+2621v^4+733v^3+100v^2+4v$ \\
 & 8 & $2v^{13}+27v^{12}+170v^{11}+684v^{10}+2009v^9+4482v^8+7390v^7+8520v^6+6470v^5+3027v^4+793v^3+98v^2+4v$ \\
 & 9 & $v^{13}+18v^{12}+143v^{11}+696v^{10}+2360v^9+5736v^8+9758v^7+11153v^6+8175v^5+3621v^4+882v^3+98v^2+3v$ \\
 & 10 & $v^{14}+14v^{13}+99v^{12}+476v^{11}+1723v^{10}+4811v^9+10110v^8+15235v^7+15622v^6+10320v^5+4115v^4+898v^3+90v^2+3v$ \\
 & 11 & $v^{14}+15v^{13}+109v^{12}+531v^{11}+1968v^{10}+5640v^9+11968v^8+17834v^7+17810v^6+11382v^5+4380v^4+919v^3+86v^2+2v$ \\
 & 12 & $6v^{13}+88v^{12}+622v^{11}+2747v^{10}+8198v^9+16835v^8+23543v^7+21810v^6+12816v^5+4475v^4+839v^3+72v^2+2v$ \\
 & 13 & $v^{17}+17v^{16}+136v^{15}+680v^{14}+2382v^{13}+6226v^{12}+12696v^{11}+21033v^{10}+29378v^9+35038v^8+34367v^7+25583v^6+13175v^5+4287v^4+786v^3+64v^2+v$ \\
 & 14 & $v^{16}+16v^{15}+120v^{14}+561v^{13}+1840v^{12}+4555v^{11}+9054v^{10}+15182v^9$ \\*
 & & $+21573v^8+24525v^7+20430v^6+11437v^5+3934v^4+733v^3+59v^2+v$ \\
 & 15 & $2v^{15}+30v^{14}+210v^{13}+919v^{12}+2859v^{11}+6841v^{10}+13176v^9+20433v^8$ \\*
 & & $+24332v^7+20679v^6+11583v^5+3909v^4+702v^3+54v^2+v$ \\
 & 16 & $5v^{14}+70v^{13}+456v^{12}+1846v^{11}+5245v^{10}+11163v^9+18175v^8+22059v^7+18778v^6+10360v^5+3368v^4+559v^3+35v^2+v$ \\
 & 17 & $7v^{13}+91v^{12}+555v^{11}+2141v^{10}+5865v^9+11694v^8+16430v^7+15354v^6+8911v^5+2937v^4+471v^3+26v^2$ \\
 & 18 & $11v^{12}+144v^{11}+909v^{10}+3516v^9+8723v^8+13794v^7+13539v^6+7867v^5+2498v^4+374v^3+20v^2$ \\
 & 19 & $v^{13}+19v^{12}+172v^{11}+952v^{10}+3431v^9+8093v^8+12271v^7+11589v^6+6504v^5+2011v^4+296v^3+14v^2$ \\
 & 20 & $4v^{12}+78v^{11}+594v^{10}+2450v^9+6094v^8+9416v^7+8925v^6+4945v^5+1444v^4+182v^3+10v^2$ \\
 & 21 & $v^{14}+14v^{13}+91v^{12}+378v^{11}+1166v^{10}+2831v^9+5286v^8+7084v^7+6284v^6+3383v^5+995v^4+138v^3+6v^2$ \\
 & 22 & $v^{13}+13v^{12}+80v^{11}+325v^{10}+979v^9+2179v^8+3370v^7+3358v^6+1986v^5+634v^4+95v^3+4v^2$ \\
 & 23 & $v^{12}+12v^{11}+72v^{10}+292v^9+830v^8+1563v^7+1824v^6+1229v^5+431v^4+63v^3+2v^2$ \\
 & 24 & $2v^{11}+22v^{10}+118v^9+394v^8+834v^7+1061v^6+750v^5+259v^4+33v^3+v^2$ \\
 & 25 & $3v^{10}+28v^9+124v^8+315v^7+458v^6+356v^5+134v^4+18v^3$ \\
 & 26 & $3v^9+24v^8+84v^7+151v^6+138v^5+58v^4+9v^3$ \\
 & 27 & $3v^8+18v^7+43v^6+48v^5+24v^4+4v^3$ \\
 & 28 & $v^7+4v^6+6v^5+4v^4+v^3$ \\
\hline
$B_8$ & 0 & $v^8+8v^7+28v^6+56v^5+70v^4+56v^3+28v^2+8v+1$ \\
 & 1 & $7v^8+50v^7+153v^6+260v^5+265v^4+162v^3+55v^2+8v$ \\
 & 2 & $v^{10}+14v^9+92v^8+327v^7+682v^6+871v^5+684v^4+317v^3+77v^2+7v$ \\
 & 3 & $v^{11}+16v^{10}+112v^9+454v^8+1150v^7+1863v^6+1931v^5+1252v^4+479v^3+95v^2+7v$ \\
 & 4 & $v^{12}+15v^{11}+108v^{10}+486v^9+1471v^8+3015v^7+4128v^6+3690v^5+2070v^4+676v^3+110v^2+6v$ \\
 & 5 & $v^{13}+13v^{12}+92v^{11}+448v^{10}+1561v^9+3870v^8+6700v^7+7900v^6+6143v^5+3004v^4+853v^3+121v^2+6v$ \\
 & 6 & $2v^{13}+31v^{12}+229v^{11}+1065v^{10}+3401v^9+7627v^8+11942v^7+12788v^6+9067v^5+4047v^4+1042v^3+130v^2+5v$ \\
 & 7 & $v^{14}+18v^{13}+149v^{12}+771v^{11}+2814v^{10}+7535v^9+14748v^8+20617v^7+19928v^6+12771v^5+5109v^4+1158v^3+126v^2+5v$ \\
 & 8 & $v^{15}+16v^{14}+124v^{13}+621v^{12}+2266v^{11}+6414v^{10}+14299v^9+24504v^8+30923v^7+27414v^6+16226v^5+6012v^4+1263v^3+124v^2+3v$ \\
 & 9 & $3v^{14}+48v^{13}+378v^{12}+1922v^{11}+6888v^{10}+17710v^9+32338v^8+41105v^7+35391v^6+19841v^5+6811v^4+1293v^3+113v^2+3v$ \\
 & 10 & $v^{15}+20v^{14}+189v^{13}+1114v^{12}+4582v^{11}+13794v^{10}+30628v^9+49418v^8+56495v^7+44316v^6+22875v^5+7315v^4+1311v^3+106v^2+2v$ \\
 & 11 & $v^{17}+17v^{16}+137v^{15}+697v^{14}+2530v^{13}+7072v^{12}+16142v^{11}+31272v^{10}+51411v^9+68679v^8+69937v^7+50919v^6+24859v^5+7515v^4+1242v^3+90v^2+2v$ \\
 & 12 & $2v^{16}+33v^{15}+259v^{14}+1294v^{13}+4663v^{12}+13014v^{11}+29163v^{10}+52406v^9+73033v^8+75123v^7+54033v^6+25590v^5+7383v^4+1155v^3+80v^2+v$ \\
 & 13 & $6v^{15}+94v^{14}+700v^{13}+3304v^{12}+11101v^{11}+27994v^{10}+53590v^9+76467v^8+78518v^7+55502v^6+25562v^5+7094v^4+1035v^3+58v^2+v$ \\
 & 14 & $v^{16}+18v^{15}+161v^{14}+938v^{13}+3934v^{12}+12461v^{11}+30429v^{10}+57011v^9+79806v^8+80222v^7+55210v^6+24504v^5+6433v^4+865v^3+45v^2$ \\
 & 15 & $v^{17}+17v^{16}+138v^{15}+716v^{14}+2695v^{13}+7937v^{12}+19160v^{11}+38578v^{10}+63741v^9+82575v^8+79003v^7+52404v^6+22489v^5+5698v^4+731v^3+33v^2$ \\
 & 16 & $2v^{16}+32v^{15}+245v^{14}+1206v^{13}+4333v^{12}+12177v^{11}+27551v^{10}+49635v^9+68267v^8+67779v^7+45742v^6+19579v^5+4819v^4+576v^3+24v^2$ \\
 & 17 & $4v^{15}+60v^{14}+432v^{13}+2011v^{12}+6857v^{11}+18010v^{10}+36246v^9+53737v^8+55762v^7+38323v^6+16313v^5+3894v^4+439v^3+17v^2$ \\
 & 18 & $7v^{14}+101v^{13}+720v^{12}+3356v^{11}+11057v^{10}+25880v^9+42018v^8+45692v^7+31825v^6+13321v^5+3028v^4+316v^3+11v^2$ \\
 & 19 & $v^{16}+16v^{15}+123v^{14}+612v^{13}+2238v^{12}+6452v^{11}+15074v^{10}+28034v^9+39353v^8+38978v^7+25409v^6+10122v^5+2251v^4+239v^3+6v^2$ \\
 & 20 & $v^{15}+16v^{14}+125v^{13}+636v^{12}+2369v^{11}+6753v^{10}+14607v^9+22967v^8+24810v^7+17326v^6+7254v^5+1606v^4+145v^3+4v^2$ \\
 & 21 & $3v^{14}+44v^{13}+306v^{12}+1351v^{11}+4226v^{10}+9649v^9+15714v^8+17327v^7+12116v^6+4947v^5+1049v^4+98v^3+2v^2$ \\
 & 22 & $v^{14}+17v^{13}+132v^{12}+633v^{11}+2112v^{10}+5100v^9+8737v^8+10052v^7+7258v^6+3046v^5+665v^4+56v^3+v^2$ \\
 & 23 & $v^{13}+18v^{12}+142v^{11}+661v^{10}+2007v^9+4029v^8+5170v^7+4012v^6+1749v^5+377v^4+32v^3$ \\
 & 24 & $2v^{12}+29v^{11}+186v^{10}+696v^9+1629v^8+2354v^7+2001v^6+937v^5+214v^4+17v^3$ \\
 & 25 & $3v^{11}+36v^{10}+190v^9+556v^8+938v^7+882v^6+429v^5+95v^4+8v^3$ \\
 & 26 & $3v^{10}+31v^9+130v^8+276v^7+306v^6+169v^5+42v^4+3v^3$ \\
 & 27 & $3v^9+22v^8+62v^7+82v^6+50v^5+12v^4+v^3$ \\
 & 28 & $v^8+5v^7+9v^6+7v^5+2v^4$ \\
\hline
$C_8$ & 0 & $v^8+8v^7+28v^6+56v^5+70v^4+56v^3+28v^2+8v+1$ \\
 & 1 & $v^9+14v^8+71v^7+188v^6+295v^5+286v^4+169v^3+56v^2+8v$ \\
 & 2 & $v^{10}+17v^9+110v^8+371v^7+737v^6+906v^5+692v^4+315v^3+76v^2+7v$ \\
 & 3 & $v^{11}+17v^{10}+123v^9+501v^8+1254v^7+1996v^6+2034v^5+1301v^4+493v^3+97v^2+7v$ \\
 & 4 & $v^{12}+16v^{11}+120v^{10}+551v^9+1663v^8+3346v^7+4467v^6+3889v^5+2128v^4+680v^3+109v^2+6v$ \\
 & 5 & $v^{13}+15v^{12}+107v^{11}+500v^{10}+1687v^9+4116v^8+7057v^7+8244v^6+6350v^5+3080v^4+870v^3+123v^2+6v$ \\
 & 6 & $v^{13}+21v^{12}+188v^{11}+993v^{10}+3438v^9+8056v^8+12816v^7+13682v^6+9559v^5+4177v^4+1051v^3+129v^2+5v$ \\
 & 7 & $2v^{14}+30v^{13}+218v^{12}+1023v^{11}+3462v^{10}+8768v^9+16485v^8+22339v^7+21035v^6+13191v^5+5199v^4+1173v^3+128v^2+5v$ \\
 & 8 & $2v^{14}+36v^{13}+300v^{12}+1541v^{11}+5446v^{10}+13807v^9+25188v^8+32490v^7+28795v^6+16852v^5+6140v^4+1263v^3+121v^2+3v$ \\
 & 9 & $v^{16}+16v^{15}+121v^{14}+583v^{13}+2051v^{12}+5732v^{11}+13329v^{10}+25778v^9+39701v^8+45811v^7+37339v^6+20301v^5+6870v^4+1303v^3+115v^2+3v$ \\
 & 10 & $3v^{15}+48v^{14}+364v^{13}+1750v^{12}+6037v^{11}+15913v^{10}+32559v^9+50569v^8+57183v^7+44909v^6+23248v^5+7408v^4+1303v^3+102v^2+2v$ \\
 & 11 & $v^{16}+16v^{15}+127v^{14}+671v^{13}+2652v^{12}+8295v^{11}+20899v^{10}+41643v^9+62930v^8+68711v^7+51655v^6+25352v^5+7566v^4+1231v^3+91v^2+2v$ \\
 & 12 & $3v^{15}+52v^{14}+433v^{13}+2287v^{12}+8559v^{11}+23740v^{10}+48967v^9+73556v^8+77946v^7+56086v^6+26122v^5+7364v^4+1129v^3+75v^2+v$ \\
 & 13 & $v^{17}+17v^{16}+138v^{15}+715v^{14}+2683v^{13}+7882v^{12}+19096v^{11}+38978v^{10}+65736v^9+86995v^8+84977v^7+57777v^6+25751v^5+6961v^4+1000v^3+59v^2+v$ \\
 & 14 & $2v^{16}+33v^{15}+263v^{14}+1352v^{13}+5053v^{12}+14616v^{11}+33546v^{10}+60474v^9+82591v^8+81486v^7+55158v^6+24132v^5+6289v^4+856v^3+46v^2$ \\
 & 15 & $6v^{15}+94v^{14}+705v^{13}+3360v^{12}+11383v^{11}+28791v^{10}+54792v^9+76801v^8+76359v^7+51383v^6+22065v^5+5564v^4+717v^3+34v^2$ \\
 & 16 & $v^{17}+17v^{16}+136v^{15}+690v^{14}+2528v^{13}+7244v^{12}+17146v^{11}+34321v^{10}+56932v^9+73985v^8+70364v^7+45823v^6+19069v^5+4640v^4+569v^3+25v^2$ \\
 & 17 & $2v^{16}+32v^{15}+240v^{14}+1140v^{13}+3921v^{12}+10587v^{11}+23347v^{10}+41593v^9+56879v^8+55984v^7+37218v^6+15598v^5+3743v^4+434v^3+17v^2$ \\
 & 18 & $4v^{15}+60v^{14}+423v^{13}+1893v^{12}+6119v^{11}+15214v^{10}+29353v^9+42416v^8+43378v^7+29502v^6+12435v^5+2945v^4+330v^3+12v^2$ \\
 & 19 & $7v^{14}+99v^{13}+661v^{12}+2798v^{11}+8405v^{10}+18491v^9+29201v^8+31629v^7+22218v^6+9459v^5+2212v^4+234v^3+6v^2$ \\
 & 20 & $12v^{13}+164v^{12}+1041v^{11}+4053v^{10}+10556v^9+18595v^8+21609v^7+15855v^6+6884v^5+1583v^4+155v^3+4v^2$ \\
 & 21 & $v^{14}+14v^{13}+108v^{12}+584v^{11}+2270v^{10}+6219v^9+11616v^8+14180v^7+10743v^6+4706v^5+1064v^4+102v^3+2v^2$ \\
 & 22 & $2v^{13}+29v^{12}+208v^{11}+958v^{10}+2985v^9+6182v^8+8163v^7+6526v^6+2950v^5+677v^4+64v^3+v^2$ \\
 & 23 & $3v^{12}+44v^{11}+289v^{10}+1124v^9+2737v^8+4085v^7+3573v^6+1725v^5+411v^4+37v^3$ \\
 & 24 & $4v^{11}+56v^{10}+321v^9+1004v^8+1798v^7+1798v^6+953v^5+236v^4+20v^3$ \\
 & 25 & $5v^{10}+62v^9+291v^8+674v^7+803v^6+474v^5+124v^4+10v^3$ \\
 & 26 & $6v^9+59v^8+199v^7+299v^6+205v^5+58v^4+3v^3$ \\
 & 27 & $7v^8+44v^7+92v^6+77v^5+23v^4+v^3$ \\
 & 28 & $v^8+8v^7+21v^6+20v^5+5v^4$ \\
\hline
$D_8$ & 0 & $v^8+8v^7+28v^6+56v^5+70v^4+56v^3+28v^2+8v+1$ \\
 & 1 & $v^9+13v^8+64v^7+167v^6+260v^5+251v^4+148v^3+49v^2+7v$ \\
 & 2 & $v^{10}+15v^9+92v^8+304v^7+602v^6+746v^5+580v^4+272v^3+69v^2+7v$ \\
 & 3 & $v^{11}+14v^{10}+94v^9+377v^8+953v^7+1548v^6+1618v^5+1065v^4+416v^3+84v^2+6v$ \\
 & 4 & $2v^{11}+35v^{10}+246v^9+939v^8+2166v^7+3151v^6+2914v^5+1675v^4+562v^3+96v^2+6v$ \\
 & 5 & $v^{12}+19v^{11}+160v^{10}+778v^9+2358v^8+4593v^7+5807v^6+4728v^5+2403v^4+711v^3+105v^2+5v$ \\
 & 6 & $v^{14}+14v^{13}+93v^{12}+392v^{11}+1209v^{10}+2977v^9+5894v^8+8858v^7+9445v^6+6754v^5+3055v^4+801v^3+104v^2+5v$ \\
 & 7 & $3v^{13}+42v^{12}+280v^{11}+1202v^{10}+3677v^9+8080v^8+12412v^7+12906v^6+8770v^5+3704v^4+890v^3+101v^2+3v$ \\
 & 8 & $5v^{13}+75v^{12}+514v^{11}+2180v^{10}+6342v^9+12943v^8+18300v^7+17473v^6+10876v^5+4182v^4+906v^3+93v^2+3v$ \\
 & 9 & $v^{15}+15v^{14}+108v^{13}+510v^{12}+1802v^{11}+5044v^{10}+11247v^9+19327v^8+24342v^7+21367v^6+12437v^5+4513v^4+923v^3+86v^2+2v$ \\
 & 10 & $3v^{14}+45v^{13}+323v^{12}+1491v^{11}+4921v^{10}+11986v^9+21336v^8+26937v^7+23222v^6+13058v^5+4489v^4+848v^3+73v^2+2v$ \\
 & 11 & $11v^{13}+161v^{12}+1084v^{11}+4437v^{10}+12119v^9+22675v^8+28916v^7+24578v^6+13414v^5+4426v^4+795v^3+63v^2+v$ \\
 & 12 & $v^{16}+16v^{15}+120v^{14}+562v^{13}+1870v^{12}+4840v^{11}+10437v^{10}+19145v^9+28576v^8+32109v^7+25209v^6+12925v^5+3999v^4+659v^3+45v^2+v$ \\
 & 13 & $2v^{15}+30v^{14}+211v^{13}+938v^{12}+3026v^{11}+7682v^{10}+15712v^9+24986v^8+28984v^7+23016v^6+11759v^5+3570v^4+561v^3+32v^2$ \\
 & 14 & $4v^{14}+56v^{13}+369v^{12}+1556v^{11}+4798v^{10}+11272v^9+19640v^8+24046v^7+19573v^6+9983v^5+2933v^4+430v^3+24v^2$ \\
 & 15 & $8v^{13}+105v^{12}+665v^{11}+2718v^{10}+7716v^9+15082v^8+19625v^7+16335v^6+8264v^5+2344v^4+323v^3+16v^2$ \\
 & 16 & $v^{14}+17v^{13}+142v^{12}+750v^{11}+2752v^{10}+7221v^9+13315v^8+16567v^7+13236v^6+6406v^5+1730v^4+226v^3+11v^2$ \\
 & 17 & $2v^{13}+32v^{12}+246v^{11}+1170v^{10}+3694v^9+7747v^8+10529v^7+8957v^6+4548v^5+1276v^4+169v^3+6v^2$ \\
 & 18 & $v^{14}+14v^{13}+94v^{12}+412v^{11}+1333v^{10}+3302v^9+6063v^8+7701v^7+6289v^6+3065v^5+804v^4+95v^3+4v^2$ \\
 & 19 & $v^{13}+13v^{12}+85v^{11}+373v^{10}+1177v^9+2605v^8+3808v^7+3457v^6+1828v^5+508v^4+61v^3+2v^2$ \\
 & 20 & $2v^{12}+24v^{11}+139v^{10}+506v^9+1224v^8+1910v^7+1812v^6+976v^5+271v^4+32v^3+v^2$ \\
 & 21 & $3v^{11}+31v^{10}+153v^9+449v^8+807v^7+858v^6+509v^5+151v^4+16v^3$ \\
 & 22 & $3v^{10}+27v^9+109v^8+239v^7+292v^6+191v^5+61v^4+8v^3$ \\
 & 23 & $3v^9+21v^8+61v^7+90v^6+69v^5+25v^4+3v^3$ \\
 & 24 & $v^8+5v^7+10v^6+10v^5+5v^4+v^3$ \\
\hline
\end{longtable}
\end{center}

\bigskip

\noindent
\textbf{Acknowledgments:} We would like to
thank Eamonn A.\ O'Brien and William Unger for
verifying the values of the polynomials
from Table \ref{tab:coadjoint} for some small
good primes. Part of the research for this
paper was done during a stay of the second
author at the University of Birmingham
supported by a stipend of the Ruth and
Gert Massenberg Foundation, and also
during a visit of the first author to
Ruhr-University Bochum.  We thank the referees
for numerous comments, which have improved the exposition of this paper.

\bigskip

\end{document}